\theoremstyle{plain}
\newtheorem{THEOREM}{Theorem}[section]
\newtheorem{theorem}[THEOREM]{Theorem}
\newtheorem{corollary}[THEOREM]{Corollary}
\newtheorem{lemma}[THEOREM]{Lemma}
\newtheorem{proposition}[THEOREM]{Proposition}
\theoremstyle{definition}
\newtheorem{definition}[THEOREM]{Definition}
\theoremstyle{remark}
\newtheorem{remark}[THEOREM]{Remark}
\newtheorem{claim}[THEOREM]{Claim}
\newcommand{\thm}[1]{Theorem~\ref{#1}}
\newcommand{\lem}[1]{Lemma~\ref{#1}}
\newcommand{\prop}[1]{Proposition~\ref{#1}}
\newcommand{\sect}[1]{Section~\ref{#1}}
\def \d {\delta}
\def \e {\varepsilon}
\def \k {\kappa}
\def \l {\lambda}
\def \n {\nabla}
\def \s {\sigma}
\def \D {\Delta}
\def \O {\Omega}
\def \cA {\mathcal{A}}
\def \cD {\mathcal{D}}
\def \cE {\mathcal{E}}
\def \cG {\mathcal{G}}
\def \cH {\mathcal{H}}
\def \cI {\mathcal{I}}
\def \cR {\mathcal{R}}
\newcommand{\N}{\ensuremath{\mathbb{N}}}   
\newcommand{\R}{\ensuremath{\mathbb{R}}}   
\newcommand{\T}{\ensuremath{\mathbb{T}}}   
\newcommand{\I}{\ensuremath{\mathbb{I}}}   
\def \loc {\mathrm{loc}}
\def \one {{\mathds{1}}}
\def \Lip {\mathrm{Lip}}
\newcommand{\ave}[1]{\langle #1 \rangle}
\DeclareMathOperator{\diam}{diam} %
\def \lan {\langle}
\def \ran {\rangle}
\def \p {\partial}
\def \ss {\subset}
\def \GL {Gr\"onwall's Lemma}
\def \HI {H\"older inequality}
\def \CK{Csisz\'ar-Kullback inequality}
\renewcommand{\geq}{\geqslant}
\renewcommand{\leq}{\leqslant}
\def \dx  {\, \mbox{d}x}
\def \dxi  {\, \mbox{d}\xi}
\def \dy  {\, \mbox{d}y}
\def \dr  {\, \mbox{d}r}
\def \dmu  {\, \mbox{d}\mu}
\def \deta  {\, \mbox{d}\eta}
\def \dv  {\, \mbox{d} v}
\def \ddt  {\frac{\mbox{d\,\,}}{\mbox{d}t}}
\def \dW {\dot{W}}
\def \rmin{\underline{\rho}}
\def \domain {{\O^n \times \R^n}}
\def \ufilt {u_{\phi,\rho}}
\def \uf {u_{\phi,\rho}}
\def \tuf {\tilde{u}_{\phi,\tilde{\rho}}}
\def \uFavre {u_{\mathrm{F}}}
\def \uF {u_{\mathrm{F}}}
\begin{document}

\title{Global hypocoercivity of kinetic Fokker-Planck-Alignment equations}

\author{Roman Shvydkoy}

\address{851 S Morgan St, M/C 249, Department of Mathematics, Statistics and Computer Science, University of Illinois at Chicago, Chicago, IL 60607}

\email{shvydkoy@uic.edu}

\subjclass{35Q84, 35Q35, 92D25}

\date{\today}

\keywords{Cucker-Smale system, Motsch-Tadmor system, Fokker-Planck equation, hypocoercivity, emergence, collective behavior}

\thanks{\textbf{Acknowledgment.}  
	This work was  supported in part by NSF
	grants DMS-1813351 and DMS-2107956.}

\begin{abstract}
In this note we establish hypocoercivity and exponential relaxation to the Maxwellian  for a class of kinetic Fokker-Planck-Alignment equations arising in the studies of collective behavior.  Unlike previously known results in this direction that focus on  convergence near Maxwellian, our result is global for hydrodynamically dense flocks, which has several consequences. In particular, if communication  is long-range, the convergence is unconditional. If communication is local then all nearly aligned flocks quantified by smallness of the Fisher information relax to the Maxwellian. In the latter case the class of initial data is stable under the vanishing noise limit, i.e.\ it reduces to a non-trivial and natural class of traveling wave solutions to the noiseless Vlasov-Alignment equation.

The main novelty in our approach is the adaptation of a mollified Favre filtration of the macroscopic momentum into the communication protocol. Such filtration has been used previously in large eddy simulations of compressible turbulence and its new variant appeared in the proof of the Onsager conjecture for inhomogeneous Navier-Stokes system. A rigorous treatment of well-posedness  for smooth solutions  is provided. Lastly, we prove that in the limit of strong noise and local alignment  solutions to the Fokker-Planck-Alignment equation Maxwellialize to solutions of the macroscopic hydrodynamic system with the isothermal pressure. 

\end{abstract}

\maketitle

\section{Background and Motivation}

One of the most fundamental problems that arise in studies of collective behavior of large systems is to understand emergence of global phenomena from purely local interactions. The Hegselmann-Krause model of opinion dynamics \cite{HKmodel} or Vicsek model of swarming \cite{VCBCS1995} provide examples of such phenomena and are well-studied in the applied literature. In the context of alignment dynamics a class of environmental averaging models, such as Cucker-Smale \cite{CS2007a,CS2007b}, Motsch-Tadmor \cite{MT2011} and their topological counterparts \cite{STtopo,Haskovec2013,MMP2020} provide analytical framework for studying emergence in the sense of convergence to a common state $v_i \to \bar{v}$, a ``consensus", see \cite{Sbook,VZ2012,MT2014,MMPZ-survey} for detailed surveys. 

Let us give a brief overview of the problem and most recent known developments. Every alignment model consist of two core components. Those are some averaging protocol
\[
v=(v_1,\dots, v_N) \in \R^{nN} \to ( \ave{v}_1,\dots, \ave{v}_N)\in \R^{nN},
\]
where each bracket $\lan \cdot \ran_i$ encapsulates probing of the environment $\O^n$ in a  neighborhood of agent $x_i$, 
and a communication strength function $\k_i(x)$.  Both components may depend on positions of all agents $x = (x_1,\dots,x_N)$. A general alignment system is then given by
\begin{equation}\label{e:ave}
\begin{split}
\dot{x}_i & = v_i \\
\dot{v}_i & = \k_i(x) ( \ave{v}_i - v_i ).
\end{split}
\end{equation}
The classical Cucker-Smale model  is a well-studied example of such a system given by
\begin{equation}\label{e:CS}
\dot{v}_i  =\sum_{j=1}^N m_j \phi(x_i-x_j)(v_j - v_i ),
\end{equation}
where $\phi$ is a radial communication kernel and $m_j$'s represent  communication weights of agents. In this case the strength $\k_i(x) =  \sum_{j=1}^N m_j \phi(x_i-x_j)$ is based on metric proximity of the crowd around. If $\k_i(x) = 1$, one obtains the Motsch-Tadmor pure averaging model
\begin{equation}\label{e:MT}
\dot{v}_i  =  \ave{v}_i - v_i, \qquad  \ave{v}_i = \frac{\sum_{j=1}^N m_j \phi(x_i-x_j) v_j}{\sum_{k=1}^N m_k \phi(x_i-x_k)}.
\end{equation}
In both cases, exponential alignment is achieved under long range fat-tail condition $\int_0^\infty \phi(r) \dr =\infty$, see \cite{CS2007a,HL2009,MT2014}. 

Under local averaging rules it is generally impossible to achieve alignment by an obvious counterexample: if $\O^n = \R^n$ one simply sends two agents in opposite directions, and if $\O^n $ is periodic $\T^n$ one can send two agents along perpendicular geodesics with relatively rational velocities so that the agents never approach each other closer than a communication range $r_0 \ll 1$ resulting in a so called {\em locked state}. Both counterexamples can be ruled out assuming graph-connectivity of the flock at scale $r_0$. If the flock is nearly aligned and initially connected such an assumption will propagate in time resulting in exponential alignment, see \cite{Sbook,MPT2019} and references therein.

In what follows we will restrict ourselves to the periodic environment $\O^n = \ell \T^n$ to focus more on the dynamics in the bulk of a flock, and to avoid technical issues related to confinement, see however \cite{ShuT2019,Villani}. We will also be interested in large systems, $N\to \infty$, which support kinetic description  via a mean-field limit, see \cite{HL2009},
\begin{equation}\label{}
f_t + v \cdot \n_x f = \n_v( \k(x) (v - \ave{u}) f).
\end{equation}
Here, $\ave{u}$ is the corresponding averaging operation of the macroscopic velocity field, and $\k(x)$ is the  limiting communication strength. For example, in the CS case, $\k(x) = \phi \ast \rho: = \rho_\phi$ and  $\ave{u} = \frac{(u \rho)_\phi}{\rho_\phi}$, where $\rho$ and $u\rho$ are the macroscopic density and momentum,
\[
\rho(x) = \int_{\R^n} f(x,v) \dv, \qquad u\rho(x) = \int_{\R^n} v f(x,v) \dv.
\]

The issue of locked states becomes the primary obstacle for global alignment on the periodic domain. Such states are highly unlikely as they form a negligible set of data. So, it is conceivable that a deterministic approach can be successful in proving emergence for generic initial conditions, however this has only been done in 1D, \cite{DScorr}, where dimensional restrictions are severe. A more natural approach is to disrupt locked states by incorporating a small properly scaled noise
\begin{equation}\label{ }
\dot{v}_i  = \k_i(x) ( \ave{v}_i - v_i ) + \sqrt{2\s \k_i(x)} \dW_i,
\end{equation}
where $W_i$'s are independent Brownian motions in $\R^n$. The mean-field limit of solutions satisfies a Fokker-Planck-Alignment equation  (although such limit has be verified for each particular model, see for example \cite{Rosello2020} and references therein)
\begin{equation}\label{e:FPAgen}
f^\s_t + v \cdot \n_x f^\s = \s  \k(x) \D_v f^\s + \n_v( \k(x) (v - \ave{u^\s}) f^\s).
\end{equation}

Since any noise disrupts the occurance of locked states we anticipate that they would play no role in the long time dynamics of \eqref{e:FPAgen}. So, the expected behavior as $t \to \infty$ would be the same as for the linear Fokker-Planck equation which is a relaxation to the global Maxwellian
\begin{equation}\label{e:relax}
f^\s \to \mu_{\s,\bar{u},M} = \frac{M}{|\O^n|(2\pi \s)^{n/2}} e^{- \frac{|v - \bar{u}|^2}{2\s}},
\end{equation}
where $\bar{u}$ is some constant velocity vector, and $M$ is the total mass.  If such a convergence holds true, then the alignment of the original system can be recovered in the limit of vanishing noise $\s \to 0$:
\begin{equation}\label{e:ultimate}
\lim_{\s \to 0} \lim_{t \to \infty} f^\s(t)  = \frac{M}{|\O^n|} \d_{v = \bar{u}} \otimes \dx.
\end{equation}

This program has seen partial success. In \cite{DFT2010} Duan, Fornasier, and Toscani proved relaxation \eqref{e:relax} in the Cucker-Smale case for the near-Maxwellian initial data $f_0$ in the strong Sobolev metric,
\begin{equation}\label{e:nearM}
\begin{split}
f = \mu_{1,\bar{u},M} + g \sqrt{\mu_{1,\bar{u},M}} , \qquad \| g_0\|_{H^k(\domain)} \leq \e,
\end{split}
\end{equation}
 for some small $\e>0$. Although in this case the alignment term $\k(x) \ave{u} = (u \rho)_\phi$ is smooth, which avoids issues with well-posedness, the system does not have a globally decaying Lyapunov function -- entropy. A similar result was alluded to in \cite{DFT2010} for the Motsch-Tadmor case, which also suffers from the lack of entropy. More recently, Choi \cite{Choi2016} demonstrated the limit \eqref{e:relax} for purely local Motsch-Tadmor model where $\k = 1$ and $\phi = \d_0$, i.e. $\k \ave{u} = u$. The limit as $\phi \to \d_0$ was justified in \cite{KMT2014}. In this case the equation has an entropy, but the fully nonlinear nature of the alignment force requires delicate energy estimates. The result is proved for the same near-Maxwellian data \eqref{e:nearM} and convergence holds exponentially fast on the torus.

Both of these results are largely inspired by techniques developed for collisional models, \cite{Duan2007,Guo2002}, where the perturbative analysis is adapted to dealing with the particular structure of nonlinear averaging.  For alignment models, however, such issues seem to be more of a technical origin rather than related to any specific phenomenological obstruction mentioned earlier.   So, our goal in this present work is to fulfill the need for a global relaxation result departing from the near-Maxwellian settings \eqref{e:nearM} and relying instead on the natural characteristics of the flock such as connectivity or communication.

Let us assume that $\phi \in C^{\infty}(\O^n)$ is a convolution type communication kernel on the periodic domain $\O^n = \ell\T^n$ satisfying
\begin{equation}\label{e:kernel}
\int_{\O^n} \phi(x) \dx = 1, \qquad \phi(x) \geq c_0 \one_{|x|<r_0}.
\end{equation}
We call $\phi$ {\em global} if $\phi$ is bounded from below on the domain, i.e. $r_0 = \diam \O^n$. 

To state the particular kinetic model we will be interested in, we define the following 
 density-weighted filtration of the macroscopic field $u$:
 \begin{equation}\label{e:ufilt}
\ufilt = \left( \uF \right)_\phi, \qquad \uF = \frac{(u \rho)_\phi}{\rho_\phi}.
\end{equation}
The expression for $\uF$ is exactly the macroscopic analogue of the Motsch-Tadmor averaging. In fact, in the compressible turbulence this is known as the Favre filtration, see \cite{Favre},  used for large eddy simulations. One of the notable properties of the Favre filtration is that the mollified density satisfies the continuity equation relative to $\uF$,
\[
\p_t \rho_\phi = \n\cdot (\uF \rho_\phi),
\]
which makes it more accessible numerically. The extra mollification that defines our averaging protocol \eqref{e:ufilt} makes it suitable for a number of applications. First, it was implemented in the proof of the energy conservation for solutions of inhomogeneous Navier-Stokes system in the Onsager-critical spaces, see \cite{LS2016} (although defined in terms of Littlewood-Paley projections). In the context of alignment models it was instrumental in extending Figalli and Kang's hydrodynamic limit result, \cite{FK2019}, to flocks with finite support, see \cite{Sbook}. The mean-field limit of the discrete system 
\begin{equation}\label{e:MTave}
\dot{v}_i  =  \ave{v}_i - v_i, \qquad  \ave{v}_i = \int_{\O^n} \phi(x_i - y) \frac{\sum_{j=1}^N m_j \phi(y-x_j) v_j}{\sum_{k=1}^N m_k \phi(y-x_k)} \dy
\end{equation}
to the corresponding Vlasov-Alignment model
\begin{equation}\label{e:VA}
\p_t f + v\cdot \n_x f =  \n_v((v - u_{\phi,\rho})f )
\end{equation}
was also justified in \cite{Sbook}.  A notable distinction between \eqref{e:MTave} and the Motsch-Tadmor model \eqref{e:MT} is that the averaged version preserves the total momentum, and hence, the limiting velocity $\bar{u}$ is determined directly from the  initial condition. 

With the added noise we arrive at the following Fokker-Planck-Alignment equation which will be the main object of our study 
\begin{equation}\label{e:FPA}
\p_t f + v\cdot \n_x f = \s \D_v f + \n_v((v - u_{\phi,\rho})f ).
\end{equation}
Thanks to the new averaging protocol the model possesses a number of remarkable properties including global hypocoercivity, which enables us to establish a general relaxation result. We describe it in the next section.

\section{Main results}
The Fokker-Planck-Alignment equation \eqref{e:FPA}, FPA for short, obeys two conservation laws -- 
 mass and momentum 
\[
\bar{u} = \frac1M \int_\domain v f(x,v) \dx \dv, \qquad M = \int_{\domain} f(x,v) \dx \dv.
\]
Thus, the macroscopic limiting parameters of the model are determined by the initial values.  The model also possesses a Lyapunov function -- relative entropy which we address in detail in \sect{s:ee}.  In addition, the model is locally well-posed in  weighted Sobolev classes
\[
H^k_s(\domain) = \left\{ f :  \| f\|_{H^k_s}^2 = \int_\domain (1+|v|^2)^{s/2} |\p^k_{x,v} f |^2 \dx \dv <\infty \right\},
\]
for hydrodynamically dense flocks, $\rho_\phi >0$, which we will prove in the Appendix. The latter is necessary to define the filtration $\ufilt$, and in fact, implies that $\ufilt \in C^\infty$. Moreover, the solutions will not blowup as long as $\rho_\phi$ remains positive. In particular, if $\phi$ is global, then $\rho_\phi \geq M \min \phi$, and in the case the FPA is globally well-posed.

We will need a more quantitative definition of the hydrodynamic density.

\begin{definition}\label{d:hc}
We say that the flock is {\em uniformly hydrodynamically dense} at a scale $r>0$ if there exists an adimensional $\d >0$ such that 
\begin{equation}\label{e:hc}
\inf_{t>0,\, x\in \O^n} \frac{1}{M} \int_{|x-\xi| <r} \rho(\xi,t) \dxi \geq \d.
\end{equation}
\end{definition}

Note that if a flock is dense at scale $r'$, it is dense at any larger scale $r''>r'$, and every flock is trivially dense at the global scale $r = \diam \O^n$. It is also clear that every part of a dense flock can be connected by a graph with legs of size $<r$. In fact, dense flocks are also automatically chain connected at scale $r$ in the sense defined in \cite{MPT2019}.  If $r=r_0$, where $r_0$ is the communication range \eqref{e:kernel}, then clearly 
\begin{equation}\label{e:rlowfromdens}
\rho_\phi \geq c(\d,c_0,r_0) >0. 
\end{equation}

Our main result states that all that is required for solutions to relax is hydrodynamic density at a scale smaller than communication range $r_0$.

\begin{theorem}\label{t:main}
Suppose $f \in H^k_s $ is a classical global solution to \eqref{e:FPA}, $\s <\s_0$, which is uniformly hydrodynamically dense at a scale $r<r_0$. Then $f$  relaxes to the corresponding Maxwellian at an exponential rate
\begin{equation}\label{e:relaxexp}
\|f(t) - \mu_{\s,\bar{u},M}\|_{L^1(\domain)} \leq c_1 M e^{- c_2 \s^{1/2} t},
\end{equation}
for some $c_1,c_2>0$ depending only on the parameters $\d,r,r_0,\s_0$, and $\O^n, \phi$.
\end{theorem}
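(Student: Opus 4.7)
The plan is to establish exponential decay of a modified relative entropy with respect to the target Maxwellian $\mu^* = \mu_{\sigma,\bar u,M}$. The starting point is the computation of the time derivative of $\mathcal{H}(f|\mu^*) = \int f\log(f/\mu^*)\,dx\,dv$. Using \eqref{e:FPA} and the fact that $\log\mu^*$ is $x$-independent, the transport contribution integrates to zero, while the Fokker-Planck part can be rewritten as $\sigma\nabla_v\cdot(\mu^*\nabla_v(f/\mu^*)) + \nabla_v\cdot((\bar u - u_{\phi,\rho})f)$. Integration by parts then yields
\begin{equation*}
\frac{d}{dt}\mathcal{H}(f|\mu^*) = -\sigma I_v(f|\mu^*) + \frac{1}{\sigma}\int_{\Omega^n}|\uF - \bar u|^2\,\rho_\phi\,dx,
\end{equation*}
where $I_v$ denotes the relative Fisher information in $v$; the cross term telescopes via the Favre identities $\int u_{\phi,\rho}\,u\rho\,dx = \int|\uF|^2\rho_\phi\,dx$ and $\int u_{\phi,\rho}\rho\,dx = M\bar u$. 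The first contribution is manifestly dissipative, but the second is a non-negative macroscopic defect that has to be killed separately.

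To control that defect I would derive an auxiliary dissipation inequality for the Favre kinetic energy $\mathcal{E}_F := \int|\uF - \bar u|^2\rho_\phi\,dx$. Combining the continuity equation $\partial_t\rho_\phi + \nabla\cdot(\uF\rho_\phi) = 0$ with the first moment of \eqref{e:FPA}, namely $\partial_t(u\rho) = -\nabla\cdot\int v\otimes v f\,dv - (u-u_{\phi,\rho})\rho$, and mollifying by $\phi$, one obtains an evolution equation for $\uF\rho_\phi$ in which the alignment term produces a \emph{dissipative} quadratic form in the Favre velocity, provided one has a Poincar\'e-type inequality of the form $\mathcal{E}_F \le C\int\rho_\phi|\nabla\uF|^2\,dx$ plus a fluctuation remainder. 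This is precisely where the uniform hydrodynamic density assumption \eqref{e:hc} at scale $r<r_0$ enters: by \eqref{e:rlowfromdens} the weight $\rho_\phi$ is uniformly bounded below, so the double-convolution averaging $u\mapsto u_{\phi,\rho}$ is against a positive kernel, and a chaining argument covering $\Omega^n$ by balls of radius $r$ with overlaps enforced by $r<r_0$ converts density-connectivity into a quantitative weighted spectral gap depending only on $\delta, r, r_0, \phi, \Omega^n$. The fluctuation remainder $\int f|v-u|^2\,dx\,dv$ is absorbed by $\sigma I_v$ through a log-Sobolev / Brascamp--Lieb estimate.

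Combining these into a modified Lyapunov functional $\mathcal{L} := \mathcal{H}(f|\mu^*) + \varepsilon\sigma^{-1}\mathcal{E}_F$ with a small parameter $\varepsilon>0$, the positive $(1/\sigma)\mathcal{E}_F$ forcing in the entropy identity is absorbed by a $-c_1\varepsilon\sigma^{-1}\mathcal{E}_F$ contribution from the Favre step, while the Fisher information in the original identity remains negative after subtracting a small $\varepsilon$-multiple produced by the fluctuation remainder. A velocity log-Sobolev inequality (Gaussian reference $\mu^*$) together with a Villani-type hypocoercive cross-term transferring $v$-regularity to $x$-regularity via the transport operator $v\cdot\nabla_x$ then closes the loop and yields $\frac{d\mathcal{L}}{dt} \le -c\sqrt{\sigma}\,\mathcal{L}$; the square-root rate reflects the optimal balance between microscopic relaxation at rate $\sigma$ and the macroscopic forcing at rate $1/\sigma$. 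The $L^1$ bound \eqref{e:relaxexp} then follows from the Csisz\'ar--Kullback--Pinsker inequality $\|f-\mu^*\|_{L^1}^2 \le 2M\mathcal{H}(f|\mu^*)$. The principal obstacle is the Poincar\'e step: turning the qualitative density-connectivity condition into a quantitative weighted spectral gap for the measure $\rho_\phi\,dx$ that is insensitive to the precise distribution of mass and depends only on the parameters of the flock --- this is the step that decisively exploits the \emph{Favre-plus-double-mollification} structure that is the novelty of the paper.
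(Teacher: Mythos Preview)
Your overall architecture (entropy identity, identification of the macroscopic defect $\mathcal{E}_\phi=\int |u_{\mathrm F}-\bar u|^2\rho_\phi\,dx$, a Poincar\'e/spectral-gap step exploiting density-connectivity, and Csisz\'ar--Kullback at the end) is sound, but there are two genuine gaps.

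\textbf{The Lyapunov functional is too small.} With $\mathcal L=\mathcal H(f|\mu^*)+\varepsilon\sigma^{-1}\mathcal E_F$ you cannot obtain $\frac{d}{dt}\mathcal L\le -c\sqrt\sigma\,\mathcal L$: your dissipation is only the velocity Fisher information $\sigma I_v$, and a Gaussian log-Sobolev in $v$ alone does not bound the full entropy $\mathcal H(f|\mu^*)$ --- the spatial part $\int\rho\log\rho\,dx$ is invisible to $I_v$. The ``Villani-type cross-term'' you mention in passing is not an afterthought but must be built into the functional. The paper's functional is $C_3\mathcal H+\tilde{\mathcal I}$ with $\tilde{\mathcal I}=\mathcal I_{vv}+\mathcal I_{xv}+\mathcal I_{xx}$ the full (modified) Fisher information, and the bulk of the work consists of three lemmas computing $\frac{d}{dt}\mathcal I_{vv}$, $\frac{d}{dt}\mathcal I_{xv}$, $\frac{d}{dt}\mathcal I_{xx}$. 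Here the nonlinear alignment term $A^*(u_{\phi,\rho}h)$ interacts with the Fokker-Planck part and produces exact cancellations (e.g.\ the $J_u$ contribution to $\frac{d}{dt}\mathcal I_{vv}$ reduces precisely to $2\sigma^{-1}\mathcal E_\phi$); these cancellations are what make the argument global rather than perturbative, and they are the novelty you have not touched.

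\textbf{The control of $\mathcal E_\phi$ via the mollified momentum equation is the hard way.} Differentiating $\mathcal E_F$ through the first moment brings in the Reynolds stress $\nabla\cdot\int v\otimes v f\,dv$, which must then be estimated by kinetic quantities. The paper bypasses this entirely by observing that the entropy law has \emph{two} equivalent forms, $\frac{d}{dt}\mathcal H=-\mathcal I_{vv}(f,u_{\phi,\rho})-\mathcal A$ and $\frac{d}{dt}\mathcal H=-\mathcal I_{vv}(f,0)+\mathcal E_\phi$, where $\mathcal A=\mathcal E_\phi-\mathcal E_{\phi\phi}$. A direct symmetrization (Lemma~\ref{l:Arep}) gives $\mathcal A=\tfrac12\int\rho_{\phi\phi}(x,y)|u_{\mathrm F}(x)-u_{\mathrm F}(y)|^2\,dx\,dy$, and the density assumption yields the spectral gap $\mathcal A\ge c\,\mathcal E_\phi$ (this is exactly your chaining/Poincar\'e step, but applied to $\mathcal A$, not to a gradient norm). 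Thus $\mathcal H$ itself already dissipates $\mathcal E_\phi$, and combining the two forms also yields $\frac{d}{dt}\mathcal H\lesssim -\mathcal I_{vv}$, which is what absorbs the bad $\mathcal I_{vv}$ and $\mathcal E_\phi$ terms appearing in $\frac{d}{dt}\tilde{\mathcal I}$. No separate evolution equation for $\mathcal E_F$ is needed.
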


As we noted earlier both hydrodynamic density and global well-posedness follow automatically for global communication kernels. So, in this case we obtain an unconditional result.

\begin{corollary}\label{}
If communication $\phi$ is global on $\O^n$, then the Fokker-Planck-Alignment equation \eqref{e:FPA} is globally well-posed in $H^k_s(\domain)$ and any solution satisfies \eqref{e:relaxexp}.
\end{corollary}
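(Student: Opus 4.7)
The plan is to observe that globality of $\phi$ supplies, with essentially no work, the two ingredients needed for both conclusions: a uniform positive lower bound on $\rho_\phi$ and its preservation under the dynamics. By the kernel lower bound $\phi(x) \geq c_0$ on all of $\O^n$, conservation of mass immediately gives
\[
\rho_\phi(x,t) = \int_{\O^n} \phi(x-y)\,\rho(y,t) \dy \geq c_0 M,
\]
uniformly in $x$ and $t$, for any classical solution of mass $M$.

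For the global well-posedness clause I would invoke the local well-posedness in $H^k_s(\domain)$ established in the Appendix, whose breakdown criterion at a putative maximal time $T^*<\infty$ is the degeneration of $\rho_\phi$: the averaging protocol \eqref{e:ufilt} involves division by $\rho_\phi$, and $\ufilt$ is smooth whenever $\rho_\phi$ is uniformly positive. The a priori bound $\rho_\phi \geq c_0 M$ rules this out, so the solution extends to $[0,\infty)$.

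For the exponential relaxation, the hypothesis of \thm{t:main} that the flock be uniformly hydrodynamically dense at some scale $r<r_0$ is used in the proof only through its consequence \eqref{e:rlowfromdens}, namely a uniform lower bound $\rho_\phi \geq c > 0$. In the global case this is precisely what we already have, with constant $c = c_0 M$, so the same hypocoercivity argument goes through unchanged and yields \eqref{e:relaxexp} with rates $c_1,c_2$ depending only on $c_0$, $M$, $\s_0$, and the geometry of $\O^n$ and $\phi$. The only mildly delicate point is that \thm{t:main} is formulated with the strict inequality $r<r_0$, while a global kernel gives $r_0 = \diam \O^n$; rather than invoke the theorem as a black box, I would open its proof and feed in the $\rho_\phi$ lower bound directly. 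There is no substantive obstacle here: the corollary simply records that the two nontrivial hypotheses of \thm{t:main} become vacuous once the communication range covers the entire torus.
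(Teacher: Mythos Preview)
Your argument is correct and matches the paper's (very brief) justification: global $\phi$ gives $\rho_\phi \geq c_0 M$, which feeds the continuation criterion of \thm{t:lwp}, and the density hypothesis of \thm{t:main} becomes trivial.

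One small correction to your reasoning, though it does not create a gap. The density assumption in \thm{t:main} is \emph{not} used solely through \eqref{e:rlowfromdens}; it enters a second time in the lemma establishing $\cA \geq c\,\cE_\phi$, where one needs a lower bound on the double-filtered density $\rho_{\phi\phi}(x,y)$ for $|x-y|<r_0-r$. So ``feeding in the $\rho_\phi$ lower bound directly'' is not literally sufficient. That said, for global $\phi$ one has $\rho_{\phi\phi}(x,y) = \int \phi(\xi-x)\phi(\xi-y)\rho(\xi)\,\dxi \geq c_0^2 M$ for \emph{all} $x,y$, so this second use is again trivially satisfied. Alternatively, and closer to how the paper phrases it, you can simply invoke \thm{t:main} as a black box: for a global kernel the condition $\phi \geq c_0 \one_{|x|<r_0}$ holds for any $r_0$, so take $r_0 > \diam \O^n$ and $r = \diam \O^n$; the flock is then trivially dense at scale $r < r_0$ with $\d = 1$, and no opening of the proof is needed.
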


Next, we isolate a class of solutions that remain hydrodynamically dense if initially so. Let us consider the full Fisher information associated with a distribution $f$:
\begin{equation}\label{}
\cI(f) = \int_\domain \frac{\left| \s \n_v f+ (v - \bar{u}) f \right|^2 + \s |\n_x f|^2}{f} \dv \dx.
\end{equation}

\begin{theorem}\label{t:sid}
There is are constants $\e, c_1, c_2  >0$ depending only on $\O^n$ and $\phi$ such that
if $f_0 \in  H^k_s$ satisfies
\begin{equation}\label{e:Ismall}
\cI(f_0) \leq \e \s M,
\end{equation}
then there exists a unique global solution $f\in L^\infty_\loc(\R^+; H^k_s)$ to \eqref{e:FPA} with $f(0) = f_0$. Such solution will relax to the corresponding Maxwellian at an exponential rate \eqref{e:relaxexp}.
\end{theorem}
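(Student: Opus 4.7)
The plan is to deduce \thm{t:sid} from \thm{t:main} by showing that the smallness of $\cI(f_0)$ is inherited by the solution for all time and forces the flock to remain uniformly hydrodynamically dense at a fixed scale $r<r_0$. The mechanism has three ingredients: a log-Sobolev inequality to convert Fisher information into relative entropy, the \CK{} to convert entropy into $L^1$-proximity of $\rho$ to its mean, and a continuity bootstrap that rides on the monotonicity of $H(f(t)|\mu)$ available from the hypocoercive framework of \sect{s:ee}.

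First I would record the twisted form of $\cI$. Writing $\mu=\mu_{\s,\bar u,M}$ and using $\n_v\log\mu=-(v-\bar u)/\s$ together with $\n_x\mu=0$,
\[
\cI(f)=\s\int_{\domain}f\left(\s|\n_v\log(f/\mu)|^2+|\n_x\log(f/\mu)|^2\right)\dv\dx.
\]
Splitting $H(f|\mu)$ into the relative entropy of $\rho$ against the uniform measure on $\O^n$ plus the conditional $v$-entropy against the Gaussian, the torus log-Sobolev inequality controls the first piece, while the Gaussian log-Sobolev inequality (with its sharp constant $\s$) controls the second; the $\s$-dependence exactly cancels, giving $H(f|\mu)\leq C_1(\O^n)\,\cI(f)/\s$. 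The \CK{} then yields $\|f-\mu\|_{L^1}^2\leq 2M H(f|\mu)\leq C_2(\O^n)\,M\cI(f)/\s$. Integrating in $v$ and convolving with $\phi$ (using $\phi*(M/|\O^n|)=M/|\O^n|$),
\[
\|\rho_\phi-M/|\O^n|\|_{L^\infty(\O^n)}\leq \|\phi\|_\infty\,\|\rho-M/|\O^n|\|_{L^1(\O^n)}\leq \|\phi\|_\infty\sqrt{C_2(\O^n)\,M\cI(f)/\s}.
\]
Choosing $\e$ small enough, in terms of $\O^n$ and $\|\phi\|_\infty$, that the right-hand side is at most $M/(4|\O^n|)$ whenever $\cI(f)\leq\e\s M$, one obtains at $t=0$ the pointwise bound $\rho_\phi(\cdot,0)\geq 3M/(4|\O^n|)$. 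Together with $\phi\geq c_0\one_{|x|<r_0}$, this delivers uniform hydrodynamic density at scale $r=r_0/2<r_0$ with a universal $\d=\d(\O^n,\phi)$.

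Next I would propagate this lower bound along the flow via a continuity argument. The Appendix provides a local solution on $[0,T_{\max})$, extendable as long as $\rho_\phi$ stays positive. Define
\[
T^*=\sup\{T\in[0,T_{\max}):\rho_\phi(t,\cdot)\geq M/(2|\O^n|)\text{ on }[0,T]\}.
\]
On $[0,T^*)$ the flock is uniformly hydrodynamically dense at scale $r_0/2$, and the entropy identity proved in \sect{s:ee} as the engine of \thm{t:main} then forces $H(f(t)|\mu)\leq H(f_0|\mu)\leq C_1\e M$ on that interval. Re-running the \CK{} and convolution estimate at time $t$ — which only uses smallness of $H(f(t)|\mu)$, not of $\cI(f(t))$ — yields the strict improvement $\rho_\phi(t,\cdot)\geq 3M/(4|\O^n|)$ throughout $[0,T^*)$. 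Continuity of $t\mapsto\rho_\phi(t,\cdot)$ then forbids $T^*<T_{\max}$, so $T^*=T_{\max}$; the uniform positive lower bound on $\rho_\phi$ precludes blow-up, and hence $T_{\max}=\infty$. With the global solution in $L^\infty_\loc(\R^+;H^k_s)$ uniformly hydrodynamically dense at scale $r_0/2<r_0$ with universal $\d$, \thm{t:main} delivers \eqref{e:relaxexp} with constants $c_1,c_2$ depending only on $\O^n$ and $\phi$.

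The step I expect to be the main obstacle is the monotonicity of $H(f(t)|\mu)$ used in the bootstrap. The entropy dissipation under \eqref{e:FPA} contains an alignment cross term of the form $\s^{-1}\int\rho(u-\bar u)\cdot(\bar u-\ufilt)\dx$ that is not sign-definite on its own; the point is to absorb it into the diffusive Fisher dissipation using the mollified Favre structure of $\ufilt$ together with the lower bound on $\rho_\phi$. That this can be done — and moreover yields the exponential rate $\s^{1/2}$ appearing in \eqref{e:relaxexp} — is precisely the hypocoercive content of \thm{t:main}, and the present theorem harvests it via the entropy-monotonicity statement alone.
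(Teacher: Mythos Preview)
Your argument is essentially the paper's: log-Sobolev plus Csisz\'ar--Kullback to convert smallness of $\cI(f_0)$ into $L^1$-proximity of $\rho_0$ to its mean, hence a lower bound on $\rho_\phi$ and hydrodynamic density at scale $r_0/2$; then a continuity bootstrap showing the density bound self-improves; then an appeal to \thm{t:main}. One difference worth noting: in the bootstrap the paper propagates the full quantity $C_3\cH+\tilde{\cI}$ via the Gr\"onwall inequality \eqref{e:mainGrown}, whereas you use only the entropy monotonicity $\cH(f(t)|\mu)\le\cH(f_0|\mu)$. Your route is in fact cleaner here, and your worry in the final paragraph is misplaced: the entropy law \eqref{e:elaw} reads $\frac{d}{dt}\cH=-\cI_{vv}(f,\ufilt)-\cA$ with $\cI_{vv}\ge 0$ trivially and $\cA=\cE_\phi-\cE_{\phi\phi}\ge 0$ by the energy hierarchy \eqref{e:enhier}, so monotonicity holds unconditionally as soon as the solution exists (i.e., as soon as $\rho_\phi>0$) and requires neither the hypocoercive machinery nor the quantitative density lower bound. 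The alignment ``cross term'' you describe is exactly what gets absorbed into the centered Fisher information $\cI_{vv}(f,\ufilt)$ by the identity \eqref{e:Fishident}; this is a feature of the Favre-mollified protocol, not a consequence of hypocoercivity.
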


 Let us discuss the meaning of the smallness condition \eqref{e:Ismall}. By the classical log-Sobolev inequality the Fisher information dominates, and scales like, the relative entropy
 \begin{equation}\label{e:logSobI0}
\cI(f_0) \geq  \s \int_{\domain} f_0 \log \frac{f_0}{\mu_{\s,\bar{u},M}} \dv \dx,
\end{equation}
which in turn by the \CK\ dominates, and scales like, the $L^1$-norm of the difference
\[
\geq  \frac{c \s}{M} \| f_0 - \mu_{\s,\bar{u},M} \|_{L^1}^2.
\]
So, our condition  \eqref{e:Ismall} expresses a weaker form of proximity to Maxwellian than \eqref{e:nearM}. Additionally, in the limit as $\s \to 0$, the condition \eqref{e:Ismall}  does not degenerate into $f_0 =  \frac{M}{|\O^n|} \d_{v = \bar{u}} \otimes \dx$. In fact, for the ansatz $f_0 = \rho_0(x) \mu_{\s,\bar{u},M}(v)$,  \eqref{e:Ismall} translates into a $\s$-independent inequality
\[
\int_{\O^n} |\n_x \sqrt{\rho_0}|^2 \dx \leq \e,
\]
which expresses a measure of flatness of the initial density. Thus, in the limit as $\s\to 0$, condition \eqref{e:Ismall} still holds for a non-trivial class of data $f_0 = \rho_0(x) \d_{v = \bar{u}} $, which in fact produce the natural traveling wave solutions to the noiseless Vlasov equation \eqref{e:VA}, $f = \rho_0(x - t \bar{u}) \d_{v = \bar{u}}$, the so called flocking states, see \cite{ST2}. In fact, these would be solutions  for any profile $\rho_0$, which suggests that there might be a room for improvement in condition \eqref{e:Ismall}.

The proof of \thm{t:main} and \ref{t:sid} is focused on establishing the global hypocoercivity property of the FPA in the entropic settings. The general methodology follows Villani's treatment of the linear Fokker-Planck equation \cite{Villani} where the equation for the distribution $h = f/\mu$ is represented as a sum of the degenerate dissipative, transport and, in our case, alignment components
\[
\p_t h = - \s A^* A h - B h + A^*(\ufilt h),
\]
see \sect{s:hypo} for notation.  The main idea is to modify the Fisher information $\cI$ to include a properly scaled cross-product term 
\[
\cI_{xv}(h) = \s^{3/2} \int_\domain \frac{\n_x h \cdot \n_v h}{h} \dmu,
\]
and let the transport $B$ compensate for the lack of dissipation in $A^*A$.
The new information $\tilde{\cI} = \cI + \cI_{xv} \sim \cI$ in combination with the  relative entropy $\cH$ forms a global Lyapunov function satisfying
\begin{equation}\label{e:HIgronwall}
\ddt \left[ c_1 \cH+ \tilde{ \cI} \right] \leq - c_2 \s^{1/2}  \left[ c_1 \cH+ \tilde{ \cI} \right].
\end{equation}
An application of  the \CK \ yields the desired result.

 It is in the proof of \eqref{e:HIgronwall} where the special choice of filtration $\ufilt$ starts to play a crucial role. Specifically, the non-linear alignment term $A^*(\ufilt h)$ interacts in a particular way with the Fokker-Planck component and the entropy depletion terms to produce  cancellations necessary for the global rather than near-Maxwellian coercivity. We will provide full details of this computation although the reader will notice that the Fokker-Planck part is a more explicit form of Villani's abstract argument \cite{Villani}, which also appeared in more general Riemannian settings in \cite{Calogero2012}.

Finally, in \sect{s:hydro} we provide a rigorous derivation of the corresponding isentropic Euler-Alignment system 
\begin{equation}\label{e:macrointro}
\begin{split}
\rho_t  + \n \cdot (u \rho) & = 0\\
(\rho u)_t + \n \cdot (\rho u \otimes u) +  \n \rho &=  \rho( \ufilt - u),
\end{split}
\end{equation}
as the hydrodynamic limit of solutions to the strong noise/local alignment kinetic FPA  \eqref{e:FPAe}. The classical Cucker-Smale case was settled previously by Karper, Mellet, and Trivisa  in \cite{KMT2015}.  Here we take a more direct approach by  accessing  the full kinetic relative entropy rather than the macroscopic one, which enables us to provide a more economical argument.

\section{Entropy and Energy}\label{s:ee}

Let us make a few assumptions that will simplify to the notation. By the Galilean invariance 
\[
f(t,x,v) \to f(t,x+tV, v+V),
\]
we can assume that the total momentum is zero, $\bar{u} = 0$.  Since the equation is $0$-homogeneous in $f$, we can assume that the total mass of the flock is given by $M=|\O^n|$. So, the corresponding Maxwellian is given by 
\begin{equation}\label{e:Max0}
\mu = \frac{1}{(2\pi \s)^{n/2}} e^{- \frac{|v|^2}{2\s}}.
\end{equation}

Let us now introduce several key quantities.  The central quantity is the relative entropy
\begin{equation}\label{e:entropy}
\cH(f|\mu) = \s \int_{\domain} f \log \frac{f}{\mu} \dv \dx,
\end{equation}
or more explicitly,
\[
\cH(f|\mu) = \s \int_{\domain} f \log f \dv \dx + \frac{1}{2} \int_{\domain} |v|^2 f \dv\dx + \s |\O^n| \frac{n}{2}\log(2\pi \s).
\]
According to the \CK\ we have
\begin{equation}\label{ }
c \| f - \mu \|_1^2\leq \cH(f|\mu).
\end{equation}
So, to prove \thm{t:main} it suffices to establish an exponential bound on the entropy itself.

We will work with a hierarchy of energies\footnote{We intentionally leave out the $\frac12$ factor in order to simply formulas that follow.}:
\begin{equation*}\label{}
\begin{split}
E & =  \int_{\domain} |v|^2 f \dv\dx, \\
\cE & = \int_{\O^n} \rho |u|^2  \dx, \\
\cE_\phi &= \int_{\O^n} \frac{(u \rho)^2_\phi}{\rho_\phi} \dx, \\
 \cE_{\phi \phi} & = \int_{\O^n} \rho |\ufilt|^2 \dx.
\end{split}
\end{equation*} 
\begin{claim} We have
\begin{equation}\label{e:enhier}
 \cE_{\phi \phi} \leq  \cE_{\phi} \leq \cE \leq E.
\end{equation}
\end{claim}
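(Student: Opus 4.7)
My plan is to establish the three inequalities in the chain $\cE_{\phi\phi}\leq \cE_\phi \leq \cE \leq E$ one at a time, each one being a direct application of Jensen's inequality (equivalently, Cauchy--Schwarz) with respect to a probability measure, together with Fubini's theorem and the normalization $\int_{\O^n} \phi \,dx = 1$ from \eqref{e:kernel}.

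For the rightmost inequality $\cE \leq E$, I would apply Cauchy--Schwarz in velocity space with the probability measure $f(x,v)\,dv/\rho(x)$ (assuming $\rho(x)>0$; otherwise the integrand vanishes). This yields
\[
|u(x)\rho(x)|^2 = \Bigl|\int_{\R^n} v\, f(x,v)\,dv\Bigr|^2 \leq \rho(x) \int_{\R^n} |v|^2 f(x,v)\,dv,
\]
so that $\rho|u|^2 \leq \int |v|^2 f\,dv$ pointwise in $x$; integration over $\O^n$ gives $\cE \leq E$.

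For the middle inequality $\cE_\phi \leq \cE$, I would apply Cauchy--Schwarz in $y$ to the convolution $(u\rho)_\phi(x) = \int \phi(x-y)\, u(y)\rho(y)\,dy$, splitting the integrand as $\phi(x-y)^{1/2}\rho(y)^{1/2} \cdot \phi(x-y)^{1/2}\rho(y)^{1/2} u(y)$:
\[
|(u\rho)_\phi(x)|^2 \leq \rho_\phi(x) \cdot (\rho|u|^2)_\phi(x).
\]
Dividing by $\rho_\phi(x)$, integrating over $\O^n$, and invoking Fubini together with $\int \phi = 1$ gives $\cE_\phi \leq \int_{\O^n} (\rho|u|^2)_\phi\,dx = \cE$.

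For the leftmost inequality $\cE_{\phi\phi} \leq \cE_\phi$, I would first rewrite $\cE_\phi = \int_{\O^n} \rho_\phi |\uF|^2\,dx$ using the definition $\uF = (u\rho)_\phi/\rho_\phi$. Since $\phi(x-\cdot)\,dy$ is a probability measure on $\O^n$, Jensen's inequality for the convex function $|\cdot|^2$ applied to $\ufilt = (\uF)_\phi$ yields $|\ufilt(x)|^2 \leq (|\uF|^2)_\phi(x)$. Multiplying by $\rho(x)$, integrating, and swapping the order of integration (using symmetry of the convolution kernel, which follows from the radial/convolution-type assumption on $\phi$) gives
\[
\cE_{\phi\phi} \leq \int_{\O^n} \rho(x) (|\uF|^2)_\phi(x)\,dx = \int_{\O^n} \rho_\phi(y)|\uF(y)|^2\,dy = \cE_\phi.
\]
There is no genuine obstacle here; the only subtlety worth noting is that each step requires $\rho$ or $\rho_\phi$ to be positive on its support (otherwise the quotient is defined by convention to be zero, and the inequality is trivial), which is automatic in the hydrodynamically dense regime of \defin{d:hc}.
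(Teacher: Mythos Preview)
Your proof is correct and follows essentially the same approach as the paper, which only sketches the argument by saying ``The last inequality is the classical maximization principle. The rest follow by application of the H\"older and Minkowski inequalities.'' Your explicit use of Cauchy--Schwarz/Jensen with respect to the appropriate probability measures, together with Fubini and the evenness of $\phi$, is exactly what the paper's one-line hint unpacks to.
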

The last inequality is the classical maximization principle. The rest follow by application of the H\"older and Minkowski inequalities.

The difference between the two mollified macroscopic energies will play a special role in the analysis, 
\[
\cA = \cE_\phi -\cE_{\phi \phi}.
\]
It represents a quantitative measure for alignment for the Favre filtered field $\uFavre$ as will be elaborated in \lem{l:Arep}.
 
Next, for a macroscopic field $u$ we consider the partial Fisher information centered at $u$
\begin{equation}\label{}
\cI_{vv}(f,u) = \int_\domain \frac{\left| \s \n_v f+ (v - u)f \right|^2}{f} \dv \dx.
\end{equation}
Pertaining to the situation when $u = \ufilt$ we observe the identity which follows by a simple expansion of the numerator:
\begin{equation}\label{e:Fishident}
\cI_{vv}(f,\ufilt) = \cI_{vv}(f,0) + \cE_{\phi \phi} - 2\cE_\phi.
\end{equation}

\begin{lemma}
We have the following two forms of the entropy law:
\begin{align}
\ddt \cH(f|\mu) & = - \cI_{vv}(f,\ufilt) - \cA, \label{e:elaw} \\
\ddt \cH(f|\mu) & = -\cI_{vv}(f,0) +  \cE_\phi. \label{e:elaw0} 
\end{align}
\end{lemma}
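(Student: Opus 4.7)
The plan is to differentiate $\cH(f|\mu)$ directly, use the FPA equation to substitute for $\p_t f$, and then handle the transport, Fokker-Planck, and alignment contributions in turn. The second identity \eqref{e:elaw0} is not proved independently; it will follow from \eqref{e:elaw} combined with the algebraic identity \eqref{e:Fishident} already stated. So the whole task reduces to establishing \eqref{e:elaw}.

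First I would write $\ddt \cH = \s \int \p_t f \log (f/\mu) \dv \dx$, noting that the extra $\s \int \p_t f \dv \dx$ term vanishes by mass conservation. Substituting $\p_t f = -v\cdot \n_x f + \n_v\cdot J$, where
\[
J := \s \n_v f + (v - \ufilt) f,
\]
I would kill the transport term by observing that $\log \mu$ depends only on $v$, so $v\cdot \n_x f \log(f/\mu) = v\cdot \n_x\big(f\log f - f + \frac{|v|^2}{2\s}f\big)$, which is an $x$-divergence and integrates to zero on $\O^n$.

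The crux is the $v$-flux term. Integration by parts in $v$ gives
\[
\s\int \n_v\cdot J \, \log(f/\mu) \dv \dx = -\s\int J\cdot \n_v \log(f/\mu) \dv \dx.
\]
The key identity is that $\s \n_v \log(f/\mu) = \s \n_v f / f + v = J/f + \ufilt$. Substituting this splits the integral into
\[
-\int \frac{|J|^2}{f} \dv \dx \;-\; \int J \cdot \ufilt \dv \dx = -\cI_{vv}(f,\ufilt) \;-\; \int J\cdot \ufilt \dv \dx.
\]
For the second integral, the $\s\n_v f$ piece of $J$ drops by another $v$-integration by parts (since $\ufilt$ is $v$-independent), leaving
\[
\int J\cdot \ufilt \dv \dx = \int (u-\ufilt)\cdot \ufilt \, \rho \dx = \int \rho u \cdot \ufilt \dx - \cE_{\phi\phi}.
\]

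The main technical step — though essentially one line once noticed — is to simplify $\int \rho u \cdot \ufilt \dx$ using the self-adjointness of convolution with the symmetric kernel $\phi$. Unfolding $\ufilt = \phi \ast \uFavre$ and moving the $\phi$ onto $\rho u$,
\[
\int \rho u \cdot \ufilt \dx = \int (u\rho)_\phi \cdot \uFavre \dx = \int \frac{|(u\rho)_\phi|^2}{\rho_\phi} \dx = \cE_\phi.
\]
This is exactly the place where the particular Favre-mollified protocol \eqref{e:ufilt} pays off: it forces the nonlinear alignment contribution to collapse into the macroscopic energy hierarchy. Collecting everything yields
\[
\ddt \cH(f|\mu) = -\cI_{vv}(f,\ufilt) - (\cE_\phi - \cE_{\phi\phi}) = -\cI_{vv}(f,\ufilt) - \cA,
\]
which is \eqref{e:elaw}. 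Substituting \eqref{e:Fishident} then immediately produces \eqref{e:elaw0}. I expect no real obstacle beyond the bookkeeping; one should also briefly justify that the $H^k_s$ regularity together with hydrodynamic density (so that $\ufilt \in C^\infty$) is enough to make the integrations by parts and boundary-at-infinity arguments in $v$ rigorous.
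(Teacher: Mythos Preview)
Your proposal is correct and is precisely the ``direct verification'' the paper alludes to: differentiate, kill the transport term as an $x$-divergence, integrate the $v$-flux by parts against $\s\n_v\log(f/\mu)=J/f+\ufilt$, and then use the self-adjointness of convolution with the even kernel $\phi$ to collapse $\int \rho u\cdot\ufilt\,\dx$ into $\cE_\phi$ (the same identity the paper invokes later in Section~\ref{s:hypo}). The deduction of \eqref{e:elaw0} from \eqref{e:elaw} via \eqref{e:Fishident} is exactly what the paper does as well.
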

The proof of \eqref{e:elaw} goes by a direct verification. Then \eqref{e:elaw0} follows from \eqref{e:elaw}  and  \eqref{e:Fishident}. 

From \eqref{e:elaw} we can see that the FPA equation has a globally decaying entropy. One not so obvious consequence of this  is that the full energy $E$ by itself remains uniformly bounded.  To see that one has to circumvent the issue of fact that the Boltzmann functional $\int f \log f \dx \dv$  is not sign-definite. This was addressed in \cite{GJV2004} by showing that there is an absolute constant $C>0$ such that 
\[
\int_\domain | f \log f | \dx \dv \leq \int_\domain f \log f \dx \dv + \frac14 \int_\domain |v|^2 f  \dx \dv + C \leq C'_\s \cH + C''_\s.
\]
So, the energy alone is also bounded,
\begin{equation}\label{e:Eunif}
E(t) \leq C'_\s \cH_0 + C''_\s, \qquad \forall t>0.
\end{equation}

From version \eqref{e:elaw0} of the law, which links the information directly to $\mu$, see next section, we can identify two major obstacles in establishing coercivity directly -- the traditional lack of dissipation in the $x$-variable, and an additional macroscopic energy that comes from the alignment force. The two will be handled simultaneously in the next section.

\section{Hypocoercivity}\label{s:hypo}

We now get to the main proof of Theorems \ref{t:main} and \ref{t:sid}.

It is more technically convenient to recast the FPA equation in terms of the renormalized distribution $h = \frac{f}{\mu}$, which satisfies
\begin{equation}\label{e:FPAh}
h_t + v \cdot \n_x h = \s \D_v h - v \cdot \n_v h  - \ufilt \cdot \n_v h + \s^{-1}(\ufilt \cdot v) h.
\end{equation}
The Fokker-Planck part of the equation \eqref{e:FPAh} has the traditional structure of an evolution semigroup given by the generator
\[
L =  \s A^* A + B, \qquad B = v \cdot \n_x, \quad A =  \n_v, \quad A^* = ( \s^{-1} v -  \n_v) \cdot.
\]
Here the adjoint is understood with respect to the inner product of the weighted space $L^2(\mu)$:
\[
\lan g_1 g_2 \ran = \int_\domain g_1 g_2 \dmu, \quad \dmu = \mu \dv \dx.
\]

The nonlinear alignment part can be represented in terms of the action of $A^*$ :
\[
A^*(\ufilt h) =   - \ufilt \cdot \n_v h + \s^{-1} (\ufilt \cdot v) h.
\]
Thus, \eqref{e:FPAh} can be written concisely as
\begin{equation}\label{e:FPALN}
h_t = - Lh +A^*(\ufilt h).
\end{equation}

We now rewrite all the entropic quantities in terms of $h$:
\[
\cH(h) = \s \int_\domain h \log h \dmu, \quad \cI_{vv}(h) = \cI_{vv}(f,0) = \s^2 \int_\domain \frac{|\n_v h|^2}{h} \dmu,
\]
and consider two additional information functionals
\[
 \cI_{xv}(h) = \s^{3/2} \int_\domain \frac{\n_x h \cdot \n_v h}{h} \dmu, \quad  \cI_{xx}(h) = \s \int_\domain \frac{|\n_x h|^2}{h} \dmu.
 \]
 Recall that the sum $\cI =  \cI_{vv}  + \cI_{xx}$ constitutes the full Fisher information 
and, by the classical (rescaled) log-Sobolev inequality, see \cite{Gross1975}, controls the relative entropy:
\begin{equation}\label{e:logSob}
 \cI(h) \geq \l \cH(h), 
\end{equation}
where $\l>0$ is independent of $\s$.
 
In the following three lemmas we will calculate evolution laws for each of the information functionals.   As will be seen, the alignment component has some remarkable cancellations and interacts closely with the Fokker-Planck part.  We also protocol dependence on $\s$ which is essential in proving \thm{t:sid} later.

\begin{lemma}\label{}
We have
\[
\ddt \cI_{vv}(h)  =  -2 \s^3 \cD_{vv}-  2  \cI_{vv} -2 \s^{1/2} \cI_{xv} + 2 \s \cE_\phi,
\]
where
\[
\cD_{vv} = \lan h |\n_v^2 \bar{h}|^2 \ran, \qquad \bar{h} = \log h.
\]
\end{lemma}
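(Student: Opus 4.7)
The plan is to write $\cI_{vv}(h) = \s^2 \int_\domain h |\n_v \bar h|^2 \dmu$ with $\bar h = \log h$, time-differentiate, and split the contribution according to the three-term decomposition $h_t = -\s A^*A h - Bh + A^*(\ufilt h)$. Each of the four terms on the right of the claim is then produced by exactly one operator: the Bakry-Emery pair $-2\s^3\cD_{vv} - 2\cI_{vv}$ by the Fokker-Planck generator $-\s A^*A$, the cross term $-2\s^{1/2}\cI_{xv}$ by the transport $-B$, and the macroscopic term $+2\s\cE_\phi$ by the nonlinear alignment $A^*(\ufilt h)$.

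For the linear Fokker-Planck piece, the identity $\ddt \cI_{vv} = -2\s^3 \cD_{vv} - 2\cI_{vv}$ is the Bakry-Emery dissipation formula for the $\s$-scaled Ornstein-Uhlenbeck semigroup. I would derive it either by a direct $\Gamma_2$ computation, or more transparently by rescaling $v \mapsto v/\sqrt\s$ to the canonical $\s=1$ OU where the analogous statement is classical, and then restoring the $\s$-powers through the definitions of $\cI_{vv}$ and $\cD_{vv}$.

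For the transport $-Bh = -v\cdot\n_x h$, chain-ruling gives $\n_v \bar h_t = -\n_x \bar h - (v\cdot\n_x)\n_v \bar h$. Assembling $h_t|\n_v\bar h|^2 + 2h\n_v\bar h\cdot\n_v\bar h_t$ collapses to $-(v\cdot\n_x)(h|\n_v\bar h|^2) - 2 h \n_v \bar h \cdot \n_x\bar h$; the first piece integrates to zero against $\dmu$ because $\mu$ is $x$-independent and $\O^n$ is periodic, while the cross term yields $-2\s^{1/2}\cI_{xv}$ after the outer $\s^2$ prefactor.

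The main subtlety sits in the alignment contribution. From $\bar h_t = -\ufilt\cdot\n_v\bar h + \s^{-1}\ufilt\cdot v$ and $\n_v \bar h_t = -\ufilt \cdot \n_v^2 \bar h + \s^{-1}\ufilt$, the second-order pieces combine through the product rule into $-\ufilt\cdot\n_v(h|\n_v\bar h|^2)$. Integration by parts in $v$ against $\mu$ picks up the Gaussian weight $\n_v\log\mu = -v/\s$ and produces exactly $-\s^{-1}\int h|\n_v\bar h|^2(\ufilt\cdot v)\dmu$; this cancels the contribution of the multiplicative term $\s^{-1}(\ufilt\cdot v)h$ in $h_t$ against $|\n_v\bar h|^2$. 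This cancellation — expressing the compatibility of the drift-plus-multiplier structure of $A^*$ with the $L^2(\mu)$ geometry — is precisely what enables a globally coercive (rather than near-Maxwellian) information evolution, and is the hardest step to verify. What survives is an affine piece $2\s^{-1}\int\ufilt\cdot\n_v h\dmu$; one last IBP in $v$ converts it into a momentum integral against $u\rho$, and the self-adjointness of the convolution $(\cdot)_\phi$ (symmetry of $\phi$) together with $\ufilt = (\uF)_\phi$, $\uF = (u\rho)_\phi/\rho_\phi$, identifies the result with $\cE_\phi$, yielding the advertised $+2\s\cE_\phi$ after the $\s^2$ prefactor.
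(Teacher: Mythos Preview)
Your proof is correct and follows essentially the same three-way decomposition ($-\s A^*A$, $-B$, $A^*(\ufilt h)$) as the paper. Your packaging of the alignment cancellation---grouping the second-order terms into the total $v$-derivative $-\ufilt\cdot\n_v(h|\n_v\bar h|^2)$ and integrating by parts against the Gaussian weight---is a slightly more streamlined variant of the paper's argument, which instead expands via the Hessian identity $\n_v^2 h = h\n_v^2\bar h + h^{-1}\n_v h\otimes\n_v h$ and then invokes the $A^*$-adjoint structure to obtain the same cancellation.
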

\begin{proof}
Let us write $\cI_{vv} = \lan \n_v h, \n_v \bar{h} \ran$. Computing the derivative  we obtain
\begin{equation*}\label{}
\begin{split}
 \frac{1}{\s^2} \ddt \cI_{vv} = 2\lan \n_v h_t \cdot \n_v \bar{h} \ran - \lan |\n_v \bar{h} |^2 h_t \ran  & = -2 \s \lan \n_v A^*A h \cdot  \n_v \bar{h} \ran + \s \lan |\n_v \bar{h} |^2 A^*A h \ran \\
&-2 \lan \n_v B h\cdot  \n_v \bar{h} \ran + \lan |\n_v \bar{h} |^2 B h \ran \\
& + 2\lan \n_v A^*(\ufilt h)\cdot \n_v \bar{h} \ran - \lan |\n_v \bar{h} |^2 A^*(\ufilt h) \ran\\
& = J_A + J_B + J_u.
\end{split}
\end{equation*}
Let us start with the $A$-part. Observe that 
\[
\p_{v_i} (A^*A h) = A^*Ah_{v_i} + \s^{-1}h_{v_i}.
\]
Thus,  adopting Einstein's summation convention:
\begin{equation*}\label{}
\begin{split}
J_A & = -2 \s \lan A^*Ah_{v_i} \bar{h}_{v_i} \ran - 2 \cI_{vv} + \s \lan |\n_v \bar{h} |^2 A^*A h \ran\\
&= -2 \s \lan Ah_{v_i} \cdot A \bar{h}_{v_i} \ran - 2  \cI_{vv} + \s \lan A |\n_v \bar{h} |^2\cdot  A h \ran\\
&= -2\s \lan h A\bar{h}_{v_i}\cdot  A \bar{h}_{v_i} \ran -2\s \lan \bar{h}_{v_i} Ah\cdot  A \bar{h}_{v_i} \ran - 2 \cI_{vv} + 2 \s \lan \bar{h}_{v_i}A \bar{h}_{v_i} \cdot  A h \ran.
\end{split}
\end{equation*}
The second and  last terms cancel, while the first one involves the sum of the squares of all second order derivatives $|\n_v^2 \bar{h}|^2$. We arrive at
\[
J_A = -2 \s \cD_{vv}  -  2 \cI_{vv}.
\]
Next,
\[
J_B = -2 \lan  \n_x h \cdot  \n_v \bar{h} \ran - 2 \lan (v \cdot \n_x h_{v_i}) \bar{h}_{v_i} \ran  + \lan |\n_v \bar{h} |^2 v \cdot \n_x h \ran.
\] 
Let us look into the middle term:
\[
- 2 \lan v \cdot \n_x h_{v_i} \bar{h}_{v_i} \ran = - 2 \lan v \cdot \n_x h_{v_i} {h}_{v_i}{h}^{-1} \ran  = - \lan v \cdot \n_x |h_{v_i}|^2 h^{-1} \ran  = -\lan |h_{v_i}|^2 v \cdot \n_x h h^{-2} \ran =-\lan |\bar{h}_{v_i}|^2 v \cdot \n_x h \ran
\]
which cancels the last term in the previous formula. So,
\[
J_B = -2  \cI_{xv}.
\]
The proof of the lemma is concluded by the following exact identity:
\[
J_u = 2 \s^{-1} \cE_\phi.
\]
To prove it we manipulate with the formula for $J_u$ as follows
\begin{equation*}\label{}
\begin{split}
J_{u} & = 2\lan \n_v A^*(\ufilt h)\cdot  \n_v \bar{h} \ran - \lan |\n_v \bar{h} |^2 A^*(\ufilt h) \ran \\
& = 2\lan \n_v ( \s^{-1} v \cdot \ufilt h - \ufilt \cdot \n_v h)\cdot  \n_v \bar{h} \ran - \lan \n_v |\n_v \bar{h} |^2 \cdot  \ufilt h \ran\\
& = 2\lan [ \s^{-1}  \ufilt h + \s^{-1}  (v \cdot \ufilt) \n_v h - \n^2_v h (\ufilt) ] \cdot  \n_v \bar{h} \ran - 2\lan \n_v^2 \bar{h}(\n_v \bar{h}) \cdot \ufilt h \ran.
\end{split}
\end{equation*}
 where $\n_v^2 h$ is the Hessian matrix of $h$. Notice that first part of the first term produces the mollified energy,
 \[
 \lan \ufilt h \cdot  \n_v \bar{h} \ran =  \int_{\domain} \ufilt \cdot v h \dmu =\int_{\domain} \ufilt \cdot v f \dx \dv = \int_{\O^n} \ufilt \cdot (u\rho) \dx = \cE_\phi.
 \] 
 We now show that the remaining part of $J_{u} $ vanishes.  Indeed, using that 
 \begin{equation}\label{e:hbarh}
 \n_v^2 h = h \n_v^2 \bar{h} + \frac{1}{h} \n_v h \otimes \n_v h
\end{equation}
 we obtain 
 \begin{equation*}\label{}
 2\lan \s^{-1}  (v \cdot \ufilt) \n_v h - h \n^2_v \bar{h} (\ufilt) - \frac{1}{h}(\ufilt \cdot \n_v h) \n_v h, \n_v \bar{h} \ran - 2\lan \n_v^2 \bar{h}(\n_v \bar{h}), \ufilt h \ran 
\end{equation*}
 {by the symmetry of the Hessian,}
 \begin{equation}\label{e:auxA}
 =  2\lan \s^{-1}  (v \cdot \ufilt) \n_v h - \frac{1}{h}(\ufilt \cdot \n_v h) \n_v h, \n_v \bar{h} \ran - 4\lan \n_v^2 \bar{h}(\n_v \bar{h}), \ufilt h \ran.
\end{equation}
Looking at the first bracketed term, we can interpret it as an action of $A^*$:
\begin{equation*}\label{}
\begin{split}
&2\lan \s^{-1}  (v \cdot \ufilt) \n_v h - \frac{1}{h}(\ufilt \cdot \n_v h) \n_v h, \n_v \bar{h} \ran = 2\lan \s^{-1}  h (v \cdot \ufilt)- h(\ufilt \cdot \n_v h), | \n_v \bar{h}|^2 \ran \\
= &2 \lan A^*(\ufilt h), |\n_v \bar{h} |^2 \ran = 2 \lan \ufilt h , \n_v  |\n_v \bar{h} |^2\ran = 4\lan \n_v^2 \bar{h}(\n_v \bar{h}), \ufilt h \ran.
\end{split}
\end{equation*}
 which cancels the second bracketed term in \eqref{e:auxA}.

\end{proof}
\begin{lemma}\label{}
We have
\[
\ddt \cI_{xv}(h)  \leq - \frac12 \s^{1/2} \cI_{xx}  - \s \cI_{xv} +C \s^{3/2} \sqrt{ \cD_{vv} \cE_\phi}+ \s^{5/2} \sqrt{ \cD_{xv} \cD_{vv}} + \frac{1}{2}\s^{3/2} \cE_\phi,
\]
where
\[
\cD_{xv} = \lan h |\n_v \n_x \bar{h}|^2 \ran.
\]
\end{lemma}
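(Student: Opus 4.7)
The plan is to differentiate $\cI_{xv} = \s^{3/2}\lan h\,\n_x\bar h\cdot\n_v\bar h\ran$ along the flow and, as in the preceding lemma, insert the splitting $h_t = -\s A^*Ah - Bh + A^*(\ufilt h)$ to write
\[
\tfrac{1}{\s^{3/2}}\ddt \cI_{xv} = J_A + J_B + J_u.
\]
After expanding $\p_t\n_v\bar h = h^{-1}\n_v h_t - h^{-1}h_t\,\n_v\bar h$ (and similarly for the $\n_x$ derivative) and performing one integration by parts in each of the $x$ and $v$ variables (recalling $\p_{v_i}\mu = -\s^{-1}v_i\mu$ while $\p_{x_i}\mu=0$), each $J_\bullet$ collapses to a sum of integrals of the generic form $\lan (h_t\text{-piece})\cdot(\n_v\bar h,\ \n_x\bar h,\ \n_v\n_x\bar h,\ \text{or }v)\ran$ that can then be simplified with the identity $\n_v^2 h = h\n_v^2\bar h + h^{-1}\n_v h\otimes\n_v h$ and the symmetry of mixed partials, mirroring the bookkeeping already carried out for $\cI_{vv}$.

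For $J_A$ I expect the same cancellation pattern as in the previous lemma: the commutator $\p_{v_i}(A^*Ah) = A^*A h_{v_i} + \s^{-1}h_{v_i}$ supplies the clean linear term $-\s\cI_{xv}$, while the remaining quadratic pieces pair against each other (using the symmetric identity for $\n_v^2 h$) to leave only a sign-definite dissipation that can be discarded. The piece $J_B$ is the heart of the hypocoercive mechanism: the commutator $[A,B] = \n_x$ means that pushing $Bh = v\cdot\n_x h$ through the integration by parts in $v$ turns one $\n_v$ into a $\n_x$, producing an integral $\lan h|\n_x\bar h|^2\ran$, i.e.\ an $\cI_{xx}$. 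After a single Cauchy--Schwarz that trades off the cross-term carrying $\n_v\n_x\bar h$ against the new dissipation $\cD_{xv}$, I expect to retain the coefficient $-\tfrac12\s^{1/2}\cI_{xx}$ and pay the remainder $\s^{5/2}\sqrt{\cD_{xv}\cD_{vv}}$.

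The main obstacle, as before, is the nonlinear piece $J_u$. I will expand $A^*(\ufilt h) = \s^{-1}(v\cdot\ufilt)h - \ufilt\cdot\n_v h$ and process the resulting terms in parallel with the $J_u$ computation from the preceding lemma: use \eqref{e:hbarh} to split $\n_v^2 h$, integrate the $(v\cdot\ufilt)$-factor by parts against $\n_v\bar h$ or $\n_x\bar h$ to produce the macroscopic energy $\cE_\phi = \int\ufilt\cdot(u\rho)\dx$, and pair the remaining second-derivative pieces either against $\n_v^2\bar h$ or against $\n_v\n_x\bar h$. The bounds $\|\ufilt\|_\infty\lesssim 1$ and $\|\ufilt\|_{L^2(\rho)}^2 = \cE_{\phi\phi}\leq\cE_\phi$ (both relying on \eqref{e:rlowfromdens} and the smoothness of $\phi$) let the Cauchy--Schwarz splits $h = h^{1/2}\cdot h^{1/2}$ deliver exactly the factors $\sqrt{\cD_{vv}\cE_\phi}$ and $\sqrt{\cD_{xv}\cD_{vv}}$ with the stated powers of $\s$, while the clean macroscopic residue gives the advertised $\tfrac12\s^{3/2}\cE_\phi$. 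The hardest part of the calculation is ensuring that none of the Cauchy--Schwarz steps leaves behind an uncontrolled $x$-dissipation of $\bar h$ (a quantity not bounded by $\cD_{vv}$ or $\cD_{xv}$), which is exactly the reason the communication protocol in \eqref{e:ufilt} carries the \emph{extra} mollification beyond the pure Favre filtration $\uFavre$.
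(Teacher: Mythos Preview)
Your decomposition into $J_A + J_B + J_u$ is exactly what the paper does, but the allocation of the individual terms is mostly wrong, and this matters because a couple of your predictions would fail if you tried to carry them out. The transport piece $J_B$ is \emph{not} where a Cauchy--Schwarz loss occurs: after the same integration-by-parts gymnastics as in the $\cI_{vv}$ computation, all cross-terms cancel exactly and $J_B$ equals $-\cI_{xx}$ on the nose (no $\sqrt{\cD_{xv}\cD_{vv}}$ remainder). Conversely, $J_A$ is \emph{not} $-\s\cI_{xv}$ plus sign-definite dissipation: the leftover term is $-\lan h\,\n_v\bar h_{x_i}\cdot\n_v\bar h_{v_i}\ran$, an indefinite mixed second-order quantity whose Cauchy--Schwarz bound is precisely the $\s\sqrt{\cD_{xv}\cD_{vv}}$ appearing in the statement. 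So the $\sqrt{\cD_{xv}\cD_{vv}}$ comes from $J_A$, not from $J_B$ or $J_u$.

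The halving of $\cI_{xx}$ and the $\tfrac12\cE_\phi$ both come from $J_u$, specifically from the clean term $\lan \n_x\bar h\cdot\ufilt\, h\ran$ after Young's inequality. More importantly, the bounds you cite, $\|\ufilt\|_\infty\lesssim 1$ and $\|\ufilt\|_{L^2(\rho)}^2\leq\cE_\phi$, are not the ones that drive $J_u$: because $\n_x$ hits $A^*(\ufilt h)$, the critical term is $\lan(\ufilt)_{x_i}h\cdot\n_v\bar h_{v_i}\ran$, and what you actually need is the estimate $\lan|(\ufilt)_{x_i}|^2 h\ran\leq C\cE_\phi$ (this is \eqref{e:uest1} in the paper). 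It is \emph{here} that the extra mollification in $\ufilt=(\uF)_\phi$ is decisive---it converts the $x$-derivative into convolution against $\n\phi$ and lets you trade $\rho$ for $\rho_\phi$, so you never face an $x$-derivative of $\bar h$ or of the raw Favre field $\uF$. Your final paragraph hints at this, but the concrete mechanism is the derivative estimate on $\ufilt$, not bounds on $\ufilt$ itself.
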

\begin{proof}
Let us express the derivative as follows
\[
\frac{1}{\s^{3/2}} \ddt \cI_{xv}(h) = \lan \n_x h_t \cdot \n_v \bar{h} \ran + \lan \n_x \bar{h} \cdot \n_v {h}_t \ran - \lan h_t \n_v \bar{h}\cdot \n_x \bar{h} \ran: = J_A + J_B + J_u,
\]
where as before $J_A,J_B,J_u$ collect contributions from $A^*A$, $B$, and alignment components, respectively. We start with the easier terms:
\begin{equation}\label{e:JBxv}
J_B = - \lan \n_x (v \cdot \n_x h) \cdot \n_v \bar{h} \ran - \lan \n_x \bar{h} \cdot \n_v (v \cdot \n_x h) \ran + \lan (v \cdot \n_x h) \n_v \bar{h}\cdot \n_x \bar{h} \ran.
\end{equation}
The middle term can be  expanded as follows
\[
 - \lan \n_x \bar{h} \cdot \n_v (v \cdot \n_x h) \ran = -  \cI_{xx} - \lan \bar{h}_{x_i} v_j h_{x_j v_i}\ran,
 \]
 integrating by parts in $x_j$,
 \[
 = -  \cI_{xx} +\lan \bar{h}_{x_i x_j} v_j h_{v_i}\ran
\]
using that $\bar{h}_{x_i x_j}= h^{-1} {h}_{x_i x_j} - h^{-2} h_{x_i} h_{x_j}$,
\[
= -  \cI_{xx} +\lan{h}_{x_i x_j} v_j \bar{h}_{v_i}\ran - \lan \bar{h}_{x_i} \bar{h}_{x_j} v_j h_{v_i} \ran
\]
and the last two terms cancel with the first and third terms in \eqref{e:JBxv}. Thus,
\[
J_B = -  \cI_{xx} .
\]

Next, we examine the $J_u$-term:
\begin{equation*}\label{}
\begin{split}
J_u & =  \lan \n_x A^*(\ufilt h) \cdot \n_v \bar{h} \ran + \lan \n_x \bar{h} \cdot \n_v  A^*(\ufilt h) \ran - \lan  A^*(\ufilt h) \n_v \bar{h}\cdot \n_x \bar{h} \ran \\
& = \lan  A^*((\ufilt)_{x_i} h)   \bar{h}_{v_i} \ran + \lan  A^*(\ufilt h_{x_i})  \bar{h}_{v_i} \ran \\
& + \lan  \bar{h}_{x_i}    A^*(\ufilt h_{v_i}) \ran + \lan \n_x \bar{h} \cdot   \ufilt h \ran \\
&- \lan  h \ufilt \cdot \n_v (\n_v \bar{h}\cdot \n_x \bar{h}) \ran \\
&= \lan  (\ufilt)_{x_i} h \cdot \n_v  \bar{h}_{v_i} \ran+ \lan  \ufilt h_{x_i} \cdot \n_v \bar{h}_{v_i} \ran + \lan  \n_v \bar{h}_{x_i}   \cdot \ufilt h_{v_i} \ran \\
& +  \lan \n_x \bar{h} \cdot   \ufilt h\ran- \lan  h \ufilt \cdot \n_v (\n_v \bar{h}\cdot \n_x \bar{h}) \ran .
\end{split} 
\end{equation*}
Note that 
\[
\lan  \ufilt h_{x_i} \cdot \n_v \bar{h}_{v_i} \ran + \lan  \n_v \bar{h}_{x_i}   \cdot \ufilt h_{v_i} \ran = \lan  h \ufilt \cdot \n_v (\n_v \bar{h}\cdot \n_x \bar{h}) \ran,
\]
so, those two terms will cancel with the last one. Thus,
\[
J_u = \lan  (\ufilt)_{x_i} h \cdot \n_v  \bar{h}_{v_i} \ran+ \lan \n_x \bar{h} \cdot   \ufilt h\ran
\]
We estimate the first term as follows:
\[
\lan  (\ufilt)_{x_i} h \cdot \n_v  \bar{h}_{v_i} \ran \leq   \cD^{1/2}_{vv} \lan | (\ufilt)_{x_i}|^2 h \ran^{1/2}.
\]
Denoting $\psi_i = |\p_{x_i} \phi|$, and in view of \eqref{e:rlowfromdens}, we obtain
\begin{equation}\label{e:uest1}
\lan | (\ufilt)_{x_i}|^2 h \ran = \int_{\O^n}  | (\ufilt)_{x_i}|^2 \rho \dx \leq  \int_{\O^n}  (|\uFavre|^2)_{\psi_i} \rho \dx
=  \int_{\O^n}  |\uFavre|^2\rho_{\psi_i} \dx  \leq  \frac{|\O^n|}{c} \|\psi_i\|_\infty  \cE_\phi.
\end{equation}
Thus,
\[
\lan  (\ufilt)_{x_i} h \cdot \n_v  \bar{h}_{v_i} \ran \leq   C \sqrt{\cD_{vv} \cE_\phi}.
\]

Turning to the remaining term, we obtain
\begin{equation*}\label{}
\begin{split}
\lan \n_x \bar{h} \cdot   \ufilt h\ran  \leq \frac{1}{2} \lan |\n_x \bar{h}|^2  h\ran + \frac{1}{2} \lan |\ufilt|^2 h \ran \leq \frac{1}{2}  \cI_{xx} + \frac{1}{2}\lan |\uFavre|^2 h_\phi \ran = \frac{1}{2} \cI_{xx} + \frac{1}{2} \int_{\O^n} |\uFavre|^2 \rho_\phi \dx  = \frac{1}{2} \cI_{xx} + \frac{1}{2}\cE_\phi.
\end{split}
\end{equation*}
In summary,
\begin{equation}\label{}
J_u + J_B \leq -   \frac{1}{2}    \cI_{xx} +  C \sqrt{\cD_{vv} \cE_\phi} +\frac{1}{2}\cE_\phi.
\end{equation}

Finally let us look into the $J_A$-term:
\[
\frac{1}{\s} J_A =   - \lan \n_x A^*Ah \cdot \n_v \bar{h} \ran - \lan \n_x \bar{h} \cdot \n_v  A^*Ah \ran + \lan  A^*Ah \n_v \bar{h}\cdot \n_x \bar{h} \ran = I + II + III.
\]
For $I$ we obtain
\[
I = - \lan A^* A h_{x_i} \bar{h}_{v_i} \ran = - \lan \n_v h_{x_i} \cdot \n_v  \bar{h}_{v_i} \ran= -  \lan h \n_v \bar{h}_{x_i} \cdot \n_v  \bar{h}_{v_i} \ran -  \lan h_{x_i} \n_v h \cdot \n_v  \bar{h}_{v_i} \ran.
\]
For $II$ we obtain
\[
II = - \lan \n_x \bar{h} \cdot \n_v h \ran- \lan  \bar{h}_{x_i} A^*A {h}_{v_i} \ran  =  -  \cI_{xv} -  \lan h \n_v \bar{h}_{x_i} \cdot \n_v  \bar{h}_{v_i} \ran -  \lan  \n_v \bar{h}_{x_i} \cdot \n_v  h \bar{h}_{v_i} \ran .
\] 
The two add up to 
\[
\begin{split}
I+II & =  -  \cI_{xv} -  \lan h \n_v \bar{h}_{x_i} \cdot \n_v  \bar{h}_{v_i} \ran - \lan  Ah \cdot A(\n_v \bar{h}\cdot \n_x \bar{h} )\ran \\
& \leq  -  \cI_{xv} + \sqrt{ \cD_{xv} \cD_{vv}} - III.
\end{split}
\]
Thus,
\[
J_A \leq - \s \cI_{xv} + \s \sqrt{ \cD_{xv} \cD_{vv}}.
\]
\end{proof}

\begin{lemma}\label{}
We have
\[
\ddt \cI_{xx}(h)  \leq   - \s^2 \cD_{xv}+C \cE_\phi.
\]
\end{lemma}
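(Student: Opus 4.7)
The plan is to mirror the computations done in the previous two lemmas: differentiate $\cI_{xx}(h)=\s\lan \n_x h\cdot \n_x\bar h\ran$ in time, split using the decomposition $h_t=-\s A^*Ah-Bh+A^*(\ufilt h)$, and track three contributions
\[
\ddt \cI_{xx}=\s\bigl(2\lan \n_x h_t\cdot \n_x\bar h\ran-\lan h_t\,|\n_x\bar h|^2\ran\bigr)=:J_A+J_B+J_u.
\]
The key structural observation is that both $A=\n_v$ and $A^*=\s^{-1}v-\n_v$ commute with $\n_{x_i}$, and that the Maxwellian $\mu$ is $x$-independent. These two facts make the $J_A$ computation essentially a copy of the one in the $\cI_{vv}$ lemma with the $v$-derivative swapped for an $x$-derivative, and make the transport term trivial.

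For $J_A$ I would write $\n_{x_i}(A^*Ah)=A^*A h_{x_i}$, integrate by parts once to replace $A^*A h_{x_i}$ by $A h_{x_i}\cdot A\,\cdot$, and use the Leibniz identity $Ah_{x_i}=h\,A\bar h_{x_i}+\bar h_{x_i}Ah$. Exactly as in the $J_A$ calculation for $\cI_{vv}$, the ``cross'' term $-2\s\lan \bar h_{x_i}Ah\cdot A\bar h_{x_i}\ran$ is cancelled by the contribution coming from $\lan A|\n_x\bar h|^2\cdot Ah\ran$, leaving only the square term
\[
J_A=-2\s^2 \lan h\,|A\bar h_{x_i}|^2\ran=-2\s^2 \cD_{xv}.
\]
I do not expect the $\s^{-1}h_{v_i}$ commutator term that appeared for $\cI_{vv}$ here, precisely because $A^*$ and $\n_x$ commute exactly; this is why no $-2\cI_{xx}$ term appears on the right-hand side.

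For $J_B$, since $B=v\cdot\n_x$ commutes with $\n_x$, we get $J_B=\s(-2\lan v\cdot \n_x h_{x_i}\cdot \bar h_{x_i}\ran+\lan (v\cdot \n_x h)|\n_x\bar h|^2\ran)$. Rewriting $-2\lan v\cdot\n_x h_{x_i}\,\bar h_{x_i}\ran=-\lan v\cdot\n_x|h_{x_i}|^2/h\ran$ and integrating by parts in $x$ against the $x$-independent measure $\mu\dx\dv$ transfers the $x$-derivative onto $1/h$, producing exactly $-\lan |\bar h_{x_i}|^2 (v\cdot \n_x h)\ran$, which cancels the remaining term. Hence $J_B=0$.

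For $J_u$, the commutation $\n_{x_i}A^*(\ufilt h)=A^*((\ufilt)_{x_i}h)+A^*(\ufilt h_{x_i})$ produces three terms; after moving $A^*$ to $A$ on the right side of the pairing, the Leibniz identity $A|\n_x\bar h|^2=2\bar h_{x_i}A\bar h_{x_i}$ together with $h_{x_i}=h\,\bar h_{x_i}$ makes the two ``$\ufilt$-differentiating-$h$'' contributions cancel exactly, leaving only
\[
J_u=2\s\lan (\ufilt)_{x_i}h\cdot \n_v\bar h_{x_i}\ran.
\]
At this point I invoke the pointwise-average bound $\lan |(\ufilt)_{x_i}|^2 h\ran\leq C\cE_\phi$ established in \eqref{e:uest1} (this is where hydrodynamic density is used), apply Cauchy--Schwarz to recognize the factor $\cD_{xv}^{1/2}$, and then Young's inequality $2\s ab\leq \s^2 a^2+b^2$ to split
\[
|J_u|\leq \s^2\cD_{xv}+C\cE_\phi.
\]
Combining $J_A+J_B+J_u$ yields the claimed inequality. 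The only delicate step is the cancellation in $J_u$; once the two terms involving $\n_v\bar h_{x_i}$ paired with $\ufilt h_{x_i}$ and $\ufilt h\bar h_{x_i}$ are recognized as identical, everything else is routine.
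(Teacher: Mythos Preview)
Your proposal is correct and follows essentially the same route as the paper: the same decomposition $J_A+J_B+J_u$, the same commutation observations yielding $J_A=-2\s^2\cD_{xv}$ and $J_B=0$, the same cancellation in $J_u$ leaving $2\s\lan (\ufilt)_{x_i}h\cdot\n_v\bar h_{x_i}\ran$, and the same Cauchy--Schwarz/Young estimate using \eqref{e:uest1}. The only cosmetic difference is that the paper factors out the outer $\s$ before splitting into $J_A,J_B,J_u$, whereas you keep it inside; the computations and constants line up exactly.
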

 \begin{proof}
 We have
 \[
\frac{1}{\s} \ddt \cI_{xx}(h) =  2\lan \n_x h_t \cdot \n_x \bar{h} \ran - \lan |\n_x \bar{h} |^2 h_t \ran .
\]

The contribution from the $B$-term cancels entirely:
\[
\begin{split}
J_B & = -2 \lan \n_x(v \cdot \n_x h) \cdot \n_x \bar{h} \ran + \lan |\n_x \bar{h} |^2 v \cdot \n_x h \ran \\
& = -2 \lan (v \cdot \n_x h_{x_i}) h_{x_i} h^{-1} \ran + \lan |\n_x \bar{h} |^2 v \cdot \n_x h \ran \\
& = - \lan (v \cdot \n_x |\n_x h|^2 h^{-1} \ran + \lan |\n_x \bar{h} |^2 v \cdot \n_x h \ran\\
& =  - \lan v \cdot \n_x h  |\n_x h|^2 h^{-2} \ran + \lan |\n_x \bar{h} |^2 v \cdot \n_x h \ran = 0.
\end{split}
\]

Turning, next, to the $A$-term we obtain
\[
\begin{split}
\frac{1}{\s} J_A & = -2 \lan \n_x A^*A h \cdot \n_x \bar{h} \ran + \lan |\n_x \bar{h} |^2 A^*Ah \ran \\
& = -2 \lan A h_{x_i} \cdot A \bar{h}_{x_i} \ran + \lan A |\n_x \bar{h} |^2 \cdot Ah \ran \\
& = -2 \lan \n_v (h \bar{h}_{x_i}) \cdot \n_v \bar{h}_{x_i} \ran + \lan \n_v |\n_x \bar{h} |^2 \cdot \n_vh \ran\\
& = -2 \lan h \n_v \bar{h}_{x_i} \cdot \n_v \bar{h}_{x_i} \ran -2  \lan \bar{h}_{x_i} \n_v h \cdot \n_v \bar{h}_{x_i} \ran + \lan \n_v |\n_x \bar{h} |^2 \cdot \n_vh \ran\\
& = -2 \cD_{xv} -  \lan \n_v |\n_x \bar{h} |^2 \cdot \n_vh \ran+ \lan \n_v |\n_x \bar{h} |^2 \cdot \n_vh \ran = -2 \cD_{xv}.
\end{split}
\]
Thus,
\[
J_A = -2 \s \cD_{xv}.
\]

Finally, the alignment term is given by
\[
J_u = 2\lan \n_x A^*(\ufilt h) \cdot \n_x \bar{h} \ran - \lan |\n_x \bar{h} |^2 A^*(\ufilt h) \ran .
\]
In the second term we simply swap the operator $A^*$: 
\[
- \lan |\n_x \bar{h} |^2 A^*(\ufilt h) \ran = - \lan \n_v |\n_x \bar{h} |^2 \ufilt h \ran.
\]
The first term is
\begin{equation*}\label{}
\begin{split}
2\lan \n_x A^*(\ufilt h) \cdot \n_x \bar{h} \ran & =2\lan  A^*(\ufilt h_{x_i})  \bar{h}_{x_i} \ran +2\lan  A^*((\ufilt)_{x_i} h) \bar{h}_{x_i} \ran \\
&= 2\lan  h  \bar{h}_{x_i} \ufilt\cdot \n_v \bar{h}_{x_i} \ran+2\lan  (\ufilt)_{x_i} h  \cdot\n_v \bar{h}_{x_i} \ran\\
& =  \lan \n_v |\n_x \bar{h} |^2 \ufilt h \ran +2\lan  (\ufilt)_{x_i} h  \cdot\n_v \bar{h}_{x_i} \ran.
\end{split}
\end{equation*}
We can see that the first term cancels with the previous one. As to the last one we estimate
\[
2\lan  (\ufilt)_{x_i} h  \cdot\n_v \bar{h}_{x_i} \ran \leq  2 \cD_{xv}^{1/2} \lan  |(\ufilt)_{x_i}|^2 h \ran^{1/2},
\]
while the  term $\lan  |(\ufilt)_{x_i}|^2 h \ran$ has been estimated previously in \eqref{e:uest1}. Thus,
\begin{equation}\label{}
J_u \leq C \sqrt{\cD_{xv}\cE_\phi} \leq \s \cD_{xv} + C_1 \s^{-1} \cE_\phi.
\end{equation}
Summing up the obtain estimates proves the result.

\end{proof}
 
 Denoting
\[
\tilde{\cI} = \cI_{vv} + \cI_{xv} + \cI_{xx}
\]
and noticing that 
\[
| \cI_{xv}| \leq \frac12( \cI_{vv}  + \cI_{xx}),
\]
we can see that $\tilde{\cI}$ is comparable to the full Fisher information $
 \frac12\cI \leq \tilde{\cI} \leq \frac32\cI$. As such, by the log-Sobolev inequality \eqref{e:logSob}, we have
\begin{equation}\label{e:logSobI}
\tilde{\cI} \geq  \frac{\l}{2} \cH(h).
\end{equation}

Let us now add the estimates from all three lemmas:
\[
\ddt \tilde{\cI}  \leq  -2 \s^3 \cD_{vv}-  2  \cI_{vv} -2 \s^{1/2} \cI_{xv} - \frac12 \s^{1/2} \cI_{xx}  - \s \cI_{xv} +C \s^{3/2} \sqrt{ \cD_{vv} \cE_\phi}+ \s^{5/2} \sqrt{ \cD_{xv} \cD_{vv}}   - \s^2 \cD_{xv}+C \cE_\phi.
\]
Using that  
\[
\s^{5/2} \sqrt{ \cD_{xv} \cD_{vv}} \leq \frac12( \s^2 \cD_{xv} + \s^3 \cD_{vv})
\]
 and that 
 \[
 C \s^{3/2} \sqrt{ \cD_{vv} \cE_\phi}  \leq \s^3 \cD_{vv} + {C} \cE_\phi,
 \]
  we can see that all the dissipative terms are in negative, and we further estimate (with possibly different $C$)
\[
\ddt \tilde{\cI}  \leq -  2 \cI_{vv} - (2 \s^{1/2} + \s) \cI_{xv} - \frac12 \s^{1/2} \cI_{xx}  + C \cE_\phi.
\]
By generalized Young's inequality, the mixed information is estimated by 
\[
(2 \s^{1/2} + \s) \cI_{xv} \leq \frac14  \s^{1/2} \cI_{xx} + C(\s_0) \cI_{vv}.
\]
Thus,
\begin{equation}\label{e:Iprelim}
\ddt \tilde{\cI}  \leq C_1(\s_0) \cI_{vv} - \frac14 \s^{1/2} \cI_{xx}+C_2(\s_0,\O^n,\phi,\d) \cE_\phi.
\end{equation}
 
To absorb $\cI_{vv}$ and the energy $ \cE_\phi$ we invoke the entropy laws \eqref{e:elaw} - \eqref{e:elaw0}. But before we do that let us take a closer look at the alignment term $\cA$.

\begin{lemma}\label{l:Arep}
We have the following formula:
\begin{equation*}\label{}
\begin{split}
\cA & = \frac12 \int_{\O^n \times \O^n} \rho_{\phi \phi}(x,y) | \uFavre(x) - \uFavre(y)|^2 \dx \dy, \\
\rho_{\phi \phi}(x,y) & = \int_{\O^n} \phi(\xi - x)\phi(\xi - y) \rho(\xi) \dxi.
\end{split}
\end{equation*}
\end{lemma}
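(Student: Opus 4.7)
The plan is a direct algebraic computation using Fubini, the normalization $\int_{\O^n}\phi = 1$, and the symmetry of the kernel $\rho_{\phi\phi}(x,y)$. Start by expanding $\cE_{\phi\phi}$: since $\ufilt = (\uFavre)_\phi$, we write
\[
\cE_{\phi\phi} = \int_{\O^n} \rho(\xi) \Big(\int_{\O^n}\phi(\xi-x)\uFavre(x)\dx\Big) \cdot \Big(\int_{\O^n}\phi(\xi-y)\uFavre(y)\dy\Big) \dxi,
\]
and then swap the order of integration to obtain
\[
\cE_{\phi\phi} = \int_{\O^n\times\O^n} \rho_{\phi\phi}(x,y)\, \uFavre(x)\cdot \uFavre(y) \dx \dy.
\]

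Next I handle $\cE_\phi$. Using the Favre identity $(u\rho)_\phi = \uFavre \rho_\phi$,
\[
\cE_\phi = \int_{\O^n} \rho_\phi(x) |\uFavre(x)|^2 \dx.
\]
The key observation is that $\rho_\phi$ can be recovered from $\rho_{\phi\phi}$ by integrating out one variable: because $\int_{\O^n}\phi(\xi-y)\dy = 1$,
\[
\int_{\O^n} \rho_{\phi\phi}(x,y) \dy = \int_{\O^n} \phi(\xi-x)\rho(\xi)\Big(\int_{\O^n}\phi(\xi-y)\dy\Big)\dxi = \rho_\phi(x).
\]
Consequently,
\[
\cE_\phi = \int_{\O^n\times\O^n} \rho_{\phi\phi}(x,y) |\uFavre(x)|^2 \dx \dy.
\]

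Finally I exploit the symmetry $\rho_{\phi\phi}(x,y) = \rho_{\phi\phi}(y,x)$, visible from its definition, to symmetrize:
\[
\cE_\phi = \frac12 \int_{\O^n\times\O^n} \rho_{\phi\phi}(x,y)\bigl(|\uFavre(x)|^2 + |\uFavre(y)|^2\bigr)\dx \dy.
\]
Subtracting the expression for $\cE_{\phi\phi}$ and applying the elementary identity $|a|^2 + |b|^2 - 2\,a\cdot b = |a-b|^2$ pointwise gives precisely
\[
\cA = \cE_\phi - \cE_{\phi\phi} = \frac12 \int_{\O^n\times\O^n} \rho_{\phi\phi}(x,y) |\uFavre(x) - \uFavre(y)|^2 \dx \dy,
\]
as claimed.

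There is no genuine obstacle here; the computation is purely structural. The only thing to be careful about is tracking the integration variables cleanly so that the Fubini swaps and the application of $\int\phi = 1$ produce exactly $\rho_\phi$ and $\rho_{\phi\phi}$ in the right slots. The result also confirms that $\cA \geq 0$ and vanishes iff $\uFavre$ is constant on the support of $\rho_{\phi\phi}$, matching its interpretation as a quantitative alignment measure for the Favre-filtered field.
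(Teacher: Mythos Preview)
Your proof is correct and uses essentially the same ingredients as the paper's---Fubini, the normalization $\int\phi=1$, and the symmetry of $\rho_{\phi\phi}$---though your organization is slightly different: you cast both $\cE_\phi$ and $\cE_{\phi\phi}$ directly as double integrals against $\rho_{\phi\phi}$ via the marginal identity $\int\rho_{\phi\phi}(x,y)\,dy=\rho_\phi(x)$, whereas the paper first manipulates $\cA$ into the asymmetric form $\int\rho_{\phi\phi}(x,y)(\uFavre(x)-\uFavre(y))\cdot\uFavre(x)\,dx\,dy$ and symmetrizes only at the last step. Both routes are equivalent and equally short.
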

\begin{proof}
The proof consists of the following streak of identities:
\begin{equation*}\label{}
\begin{split}
\cA & = \int_{\O^n} (\rho_\phi |\uFavre|^2 - \rho |(\uFavre)_\phi |^2 ) \dx\\
& = \int_{\O^n} (\rho_\phi \uFavre \cdot \uFavre - \rho (\uFavre)_\phi \cdot (\uFavre)_\phi  ) \dx\\
& = \int_{\O^n} (\rho_\phi \uFavre - (\rho (\uFavre)_\phi)_\phi  )  \cdot \uFavre \dx\\
& = \int_{\O^n \times \O^n} \phi(x - \xi) \rho(\xi)( \uFavre(x) - (\uFavre)_\phi(\xi)  )  \cdot \uFavre(x) \dxi \dx\\
& = \int_{\O^n \times \O^n\times \O^n} \phi(x - \xi)\phi(y - \xi)  \rho(\xi)( \uFavre(x) - \uFavre(y)  )  \cdot \uFavre(x) \dxi \dx \dy\\
& = \int_{\O^n \times \O^n} \rho_{\phi \phi}(x,y)( \uFavre(x) - \uFavre(y)  )  \cdot \uFavre(x) \dx \dy\\
& = \frac12 \int_{\O^n \times \O^n} \rho_{\phi \phi}(x,y) | \uFavre(x) - \uFavre(y)|^2 \dx \dy,
\end{split}
\end{equation*}
where in the last step we performed symmetrization in $x,y$.
\end{proof}

Next, we show that the alignment term controls the mollified energy itself.
\begin{lemma}\label{}
Under the assumption \eqref{e:hc} there exists a constant $c > 0$ depending on $\d$ and parameters of the system such that 
\[
\cA \geq c  \cE_\phi.
\]
\end{lemma}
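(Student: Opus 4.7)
The plan is to reduce the desired inequality to a Poincaré-type estimate on the torus by exploiting the momentum conservation. By the Galilean reduction $\bar{u}=0$, the zero-momentum constraint $\int_{\O^n} u\rho \dx=0$ combined with $\int_{\O^n} \phi=1$ yields $\int_{\O^n} \rho_\phi \uF \dx = \int_{\O^n}(u\rho)_\phi \dx = \int_{\O^n} u\rho\dx = 0$. For any vector field $g$ satisfying $\int \rho_\phi g\dx =0$, a direct expansion gives the elementary identity
\[
\int_{\O^n} \rho_\phi |g|^2 \dx = \frac{1}{2M}\int_{\O^n\times \O^n}\rho_\phi(x)\rho_\phi(y)|g(x)-g(y)|^2 \dx\dy,
\]
using $\int\rho_\phi=M$. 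Applying this to $g=\uF$ and bounding $\rho_\phi \leq \|\phi\|_\infty M$ pointwise, we get
\[
\cE_\phi \leq \frac{\|\phi\|_\infty^2 M}{2}\int_{\O^n\times\O^n} |\uF(x)-\uF(y)|^2 \dx\dy.
\]

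Next I localize to close pairs. For $|x-y|\leq r_0-r$, the ball $B(x,r)$ lies inside $B(x,r_0)\cap B(y,r_0)$, so the pointwise lower bound $\phi\geq c_0\one_{|\cdot|<r_0}$ combined with the hydrodynamic density assumption \eqref{e:hc} gives
\[
\rho_{\phi\phi}(x,y)\geq c_0^2\int_{B(x,r)}\rho(\xi)\dxi \geq c_0^2 \delta M.
\]
Hence
\[
\int\!\!\int_{|x-y|<r_0-r}\!\!|\uF(x)-\uF(y)|^2 \dx\dy \leq \frac{1}{c_0^2\delta M}\int\!\!\int \rho_{\phi\phi}(x,y)|\uF(x)-\uF(y)|^2 \dx\dy = \frac{2\cA}{c_0^2\delta M}.
\]

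The remaining piece is to pass from the localized Dirichlet form to the full one: for any $s>0$ there is a constant $C_s=C_s(\O^n)$ such that
\[
\int_{\O^n\times\O^n} |f(x)-f(y)|^2 \dx\dy \leq C_s \int\!\!\int_{|x-y|<s}|f(x)-f(y)|^2 \dx\dy
\]
for every $f\in L^2(\O^n)$. This is a classical Poincaré-type inequality on the periodic box, provable either by a finite chaining argument over a cover of $\O^n$ by overlapping balls of radius $s/3$, or by expanding in Fourier modes and noting that $c_k = 4\int_{B_s}\sin^2(\pi k\cdot z/\ell)\dz$ is strictly positive and bounded away from $0$ for $k\neq 0$ (this is the step I expect to be the only real technicality, but it is standard). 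Applying this with $s=r_0-r>0$ and chaining the three previous estimates produces
\[
\cE_\phi \leq \frac{C_{r_0-r}\|\phi\|_\infty^2}{c_0^2 \delta}\,\cA,
\]
which is the claim with $c=c_0^2\delta/(C_{r_0-r}\|\phi\|_\infty^2)$, depending only on $\delta,r,r_0,\O^n,\phi$.
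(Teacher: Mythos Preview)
Your proof is correct and follows essentially the same route as the paper: bound $\rho_{\phi\phi}(x,y)$ from below on $|x-y|<r_0-r$ using the density assumption, invoke a Poincar\'e-type inequality comparing the local and global Dirichlet forms of $\uF$, and use the zero-momentum constraint to connect back to $\cE_\phi$. The only cosmetic difference is that you use the weighted zero-mean identity $\int\rho_\phi\uF=0$ up front to write $\cE_\phi$ as a weighted Dirichlet form, whereas the paper cites an external lemma for the Poincar\'e step and works with the unweighted mean $\overline{\uF}$, dropping it at the end via $\overline{(u\rho)_\phi}=0$.
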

\begin{proof}
First, note that if $|x-y|<r_0 - r$, then
\[
\rho_{\phi \phi}(x,y) = \int_{\O^n} \phi(\eta) \phi(y-x+\eta) \rho(x-\eta) \deta \geq c_0^2 \int_{|\eta|<r} \rho(\xi) \dxi \geq c_0^2 \d.
\]
Consequently,
\[
 \rho_{\phi \phi}(x,y)  \geq c \one_{|x-y| < r_0 - r}.
\]
Thus,
\[
\cA \geq c  \int_{|x-y| < r_0 -r}  | \uFavre(x) - \uFavre(y)|^2 \dx \dy.
\]
We now invoke \cite[Lemma 2.1]{LSlimiting}, to claim that 
\[
\int_{|x-y| < r_0 -r}  | \uFavre(x) - \uFavre(y)|^2 \dx \dy \geq c(r_0,r) \| \uFavre - \overline{\uFavre} \|_{L^2(\O^n)}^2,
\]
where $\overline{\uFavre}$ is the mean value of the Favre-filtered velocity.  However, using the assumed zero momentum, $\overline{(u\rho)_\phi} = 0$, we estimate
\begin{equation*}\label{}
\begin{split}
|\O^n| \|\phi \|_\infty \| \uFavre - \overline{\uFavre} \|_{L^2(\O^n)}^2 & \geq \int_{\O^n} \rho_\phi |\uFavre -   \overline{\uFavre}|^2 \dx =   \int_{\O^n} \rho_\phi |\uFavre |^2 \dx - \underbrace{ 2 \overline{\uFavre} \cdot \overline{(u\rho)_\phi}}_{=0} + |\O^n| |\overline{\uFavre}|^2 \\
&=  \cE_\phi + |\O^n| |\overline{\uFavre}|^2 \geq \cE_\phi ,
\end{split}
\end{equation*} 
and the lemma follows.
\end{proof}

\begin{proof}[Proof of \thm{t:main}]
Let us go back to the entropy law \eqref{e:elaw} where we drop the information and use the control bound on the alignment:
\[
\ddt \cH \leq -c \cE_\phi.
\]
Combining with \eqref{e:elaw0} we obtain
\[
\left( \frac{1}{c} + 1\right) \ddt \cH \leq - \cI_{vv}.
\]
Thus, together with \eqref{e:Iprelim},
\[
\ddt \left[ C_3 \cH+ \tilde{ \cI} \right] \leq - C_4 \s^{1/2}\tilde{ \cI}.
\]
Recalling the log-Sobolev inequality \eqref{e:logSobI}, we further conclude
\begin{equation}\label{e:mainGrown}
\ddt \left[ C_3 \cH+ \tilde{ \cI} \right] \leq - C_5 \s^{1/2}  \left[ C_3 \cH+ \tilde{ \cI} \right],
\end{equation}
and \thm{t:main} follows.
\end{proof}

\begin{proof}[Proof of \thm{t:sid}]
Recall that we work under the assumption that $M = |\O^n|$, so the uniform distribution $1$ has the same mass as our density $\rho$. Let us also observe that  if a density $\rho$ is sufficiently close to be uniform in $L^1$-metric
\begin{equation}\label{e:L1rho}
\|\rho - 1\|_{L^1}^2 < \e_0,
\end{equation}
for some small $\e_0>0$, then $\rho$ is hydrodynamically connected at scale $r_0/2$ in the sense of Definition~\ref{d:hc} with $\d = \frac12$.  And in particular $\rho_\phi \geq c$, where $c$ depend only on the parameters of the model.

 Let us assume that 
$\cI(f_0) \leq \s \e$, where $\e$ is to be determined later.  By the log-Sobolev inequality \eqref{e:logSobI0}, the maximization principle, and \CK\ we obtain
\[
\e \s \geq \l \s \int_{\O^n} \rho_0 \log \rho_0 \dx \geq c \s \|\rho_0 - 1\|_{L^1}^2.
\]
Thus, $\|\rho_0 - 1\|_{L^1}^2 \leq c \e$, and if $c\e < \e_0/2$, the above discussion implies that a solution will exist on a time interval $[0,T)$ according to \thm{t:lwp}. By continuity, we can assume that on the same interval inequality 
\eqref{e:L1rho} still holds. In particular this fulfills the continuation criterion of \thm{t:lwp} and the solution can be continued until the condition \eqref{e:L1rho} is violated. Let $T^*$ be the first such time. This implies bound \eqref{e:mainGrown} on the same interval $[0,T^*)$ with all $C$'s dependent only on the parameters of the model. Hence,
\begin{equation}\label{e:HCI}
C_3 \cH(t) + \tilde{\cI} (t) \leq  C_3 \cH_0 + \tilde{\cI}_0 \leq ( C_3 \l^{-1} + 1) \tilde{\cI}_0 \leq C_4 \e \s.
\end{equation}
By the same streak of inequalities as applied initially, we obtain
\[
\|\rho - 1\|_{L^1}^2 < C_5 \e < \e_0/2,
\]
if $\e$ is chosen small enough. This implies that $T^* = \infty$ and proves the result.\end{proof}

\section{Hydrodynamic limit}\label{s:hydro}

Let us note in passing that accentuating the mollified alignment term as in  
\begin{equation}\label{e:FPAerror}
\p_t f^\e + v\cdot \n_x f^\e = \frac{1}{\e} [ \D_v f^\e + \n_v((v - \ufilt^\e)f^\e ) ], 
\end{equation}
would result in a trivial limit as $\e \to 0$. Indeed, from the law \eqref{e:elaw} we infer that $\cA \to 0$ for any $t>0$. So, at least for non-vacuous solutions this  implies $u \to \bar{u}$, and hence in the limit we obtain a perfectly aligned solution. 

By taking the moments of \eqref{e:FPA}, however, we can see that the natural macroscopic system obtained by the Maxwellian closure reads
\begin{equation}\label{e:macro}
\begin{split}
\rho_t  + \n \cdot (u \rho) & = 0\\
(\rho u)_t + \n \cdot (\rho u \otimes u) +  \n \rho &=  \rho( \ufilt - u).
\end{split}
\end{equation}
In this section we will justify \eqref{e:macro} via a similar approach taken in \cite{KMT2015,MV2008} by considering an equation with penalization forcing
\begin{equation}\label{e:FPAe}
\p_t f^\e + v\cdot \n_x f^\e = \frac{1}{\e} [ \D_v f^\e + \n_v((v - u^\e)f^\e ) ] + \n_v((v - \ufilt^\e)f^\e )  ,
\end{equation}
where $u^\e = (u\rho)^\e / \rho^\e$ is the usual macroscopic velocity field associated with $f^\e$.  The issues of well-posedness for \eqref{e:FPAe} are very similar to the ones encountered in the Cucker-Smale case, see \cite{KMT2013}, and will not be addressed here. The local alignment term already contains all the major issues associated with roughness of the field, which are even less severe for the filtered field $\ufilt$. Thus, we will work in the settings of weak solutions to \eqref{e:FPAe} which satisfy the corresponding entropy laws elaborated in \lem{l:elawepsilon}.

The macroscopic system is given by
\begin{equation}\label{e:macroe}
\begin{split}
\rho^\e_t  + \n \cdot (u^\e \rho^\e) & = 0\\
(\rho^\e u^\e)_t + \n \cdot (\rho^\e u^\e \otimes u^\e) +  \n \rho^\e + \n_x \cdot \cR_\e &=  \rho^\e( \ufilt^\e - u^\e)\\
\cR_\e & = \int_{\R^n} ( v \otimes v - u^\e \otimes u^\e - \I ) f^\e \dv.
\end{split}
\end{equation}

Let us set our notation first. We consider the local Maxwellian associated with the limiting solution
\begin{equation}\label{e:MaxLim}
\mu =\frac{ \rho(x,t) }{(2\pi)^{n/2}} e^{- \frac{|v - u(x,t)|^2}{2}},
\end{equation}
and Maxwellian of the solution to \eqref{e:FPAe},
\begin{equation}\label{ }
\mu^\e =\frac{ \rho^\e(x,t) }{(2\pi)^{n/2}} e^{- \frac{|v - u^\e(x,t)|^2}{2}}.
\end{equation}
We now consider the kinetic entropy of $f^\e$ relative to the limiting Maxwellian 
\[
\cH(f^\e | \mu) = \int_{\domain} f^\e\log \frac{f^\e}{\mu} \dv \dx.
\]
A simple identity shows that it controls the entropy of $f^\e$ relative to its own Maxwellian distribution $\mu^\e$, and the relative entropy for the  macroscopic quantities:
\begin{equation}\label{}
\cH(f^\e | \mu) = \cH(f^\e| \mu^\e) + \cH(\mu^\e | \mu),
\end{equation}
\begin{equation}\label{}
\cH(\mu^\e| \mu) = \frac12 \int_{\O^n} \rho^\e | u^\e - u|^2 \dx +  \int_{\O^n}  \rho^\e \log( \rho^\e /\rho) \dx.
\end{equation}
So, if $\cH(f^\e | \mu) \to 0$, then also $\cH(\mu^\e | \mu) \to 0$, and it is easy to show that all the macro-limits 
\begin{equation}\label{e:macrolimit}
\begin{split}
\rho^\e & \to \rho,\\
\rho^\e u^\e & \to \rho u, \\
\rho^\e |u^\e|^2 & \to \rho |u|^2.
\end{split}
\end{equation}
hold in $L^1(\O^n)$, see \cite{KMT2015}.

\begin{proposition}\label{p:limit}
Let $(u,\rho)$ be a smooth non-vacuous solution to \eqref{e:macro} on a time interval $[0,T)$ and let $\phi$ be a global kernel, $\min_{\O^n} \phi >0$. Suppose that initial distributions $f_0^\e$ converge to $\mu_0$ in the sense of entropies as $\e \to 0$:
\[
\cH(f^\e_0 | \mu_0) \to 0, 
\]
then for any $t \in [0,T)$,
\[
\cH(f^\e | \mu) \to 0.
\]
\end{proposition}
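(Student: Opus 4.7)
The plan is to apply the relative entropy (modulated energy) method in the Yau tradition, in the same spirit as the Karper--Mellet--Trivisa argument \cite{KMT2015} cited in the text, but carried out directly at the kinetic level via $\cH(f^\e|\mu)$ as indicated in the last sentence of Section~1. The goal is to derive a Gr\"onwall-type inequality
\begin{equation*}
\ddt \cH(f^\e|\mu) \leq C(T)\,\cH(f^\e|\mu) + o_\e(1),
\end{equation*}
uniformly on $[0,T)$, from which the hypothesis $\cH(f^\e_0|\mu_0)\to 0$ and Gr\"onwall give the claim. Here $C(T)$ will depend on $\|\n u\|_\infty$, $\|\n\log\rho\|_\infty$, $\|\ufilt\|_\infty$ and $\min\rho$, all finite on $[0,T)$ by the non-vacuous smoothness hypothesis.

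\textbf{Key computation.} Writing $\cH(f^\e|\mu)=\int f^\e \log f^\e \dv\dx - \int f^\e \log\mu \dv\dx$ and using \eqref{e:FPAe} together with the macroscopic system \eqref{e:macro} governing $(\rho,u)$, the first term produces the large dissipation $-\frac{1}{\e}\int\frac{|\n_v f^\e+(v-u^\e)f^\e|^2}{f^\e}\dv\dx$ (coming from the penalized Fokker--Planck component) plus a contribution from the alignment drift. For the second term, I would substitute $-\log\mu=-\log\rho+\tfrac12|v-u|^2+\const$ and use the $(\rho,u)$ equations to rewrite $\p_t\mu/\mu$ in closed form. After integration by parts in $v$ and regrouping, all $O(\e^0)$ terms should reorganize into the modulated macroscopic quantity
\begin{equation*}
\cH(\mu^\e|\mu) = \tfrac12\int_{\O^n}\rho^\e|u^\e-u|^2\dx + \int_{\O^n}\rho^\e\log(\rho^\e/\rho)\dx \leq \cH(f^\e|\mu),
\end{equation*}
multiplied by bounded smooth macroscopic derivatives, together with a quadratic remainder of the form $\int\rho^\e(u^\e-u)\otimes(u^\e-u):\n u \dx$ that is likewise controlled by $\cH(f^\e|\mu)$. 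The penalty dissipation absorbs the moment residuals involving $v\otimes v - u^\e\otimes u^\e - \I$ (i.e.\ the Reynolds stress $\cR_\e$ in \eqref{e:macroe}) via the standard pointwise bound $|\cR_\e|^2 \lesssim \e \cdot(\text{dissipation})\cdot E(t)$.

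\textbf{Handling the alignment drift.} The new feature compared to \cite{KMT2015} is the extra term $\n_v\cdot((v-\ufilt^\e)f^\e)$. Testing against $\log\mu$ after integration by parts in $v$ produces a contribution $\int\rho^\e(\ufilt^\e-u^\e)\cdot(u^\e-u)\dx$ that must be compared with the macroscopic alignment $\rho(\ufilt-u)$ appearing via $\p_t u$. The global kernel hypothesis $\min_{\O^n}\phi>0$ is crucial here: it yields $\rho^\e_\phi\geq c>0$ uniformly, so $\ufilt^\e$ is well-defined, bounded, and in fact Lipschitz uniformly in $\e$. The key step will then be to split
\begin{equation*}
\rho^\e\ufilt^\e - \rho\ufilt = \rho^\e(\ufilt^\e-\ufilt) + (\rho^\e-\rho)\ufilt,
\end{equation*}
bound the first piece pointwise by $\|\ufilt^\e-\ufilt\|_\infty$, which converges to $0$ once the macroscopic $L^1$ limits \eqref{e:macrolimit} hold (and this is granted by $\cH\to 0$ through \CK), and estimate the remaining quadratic pieces by $\cH(f^\e|\mu)$ using Cauchy--Schwarz together with the uniform energy bound \eqref{e:Eunif}.

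\textbf{Main obstacle.} The hardest point is that the alignment drift does \emph{not} benefit from the large $\frac{1}{\e}$-dissipation, which only forces local Maxwellianization toward $\mu^\e$ rather than toward $\mu$. Consequently one must close the argument by a bootstrap: the $L^\infty$-convergence $\ufilt^\e\to\ufilt$ needed to produce the $o_\e(1)$ term is itself only available once one knows $\rho^\e\to\rho$ and $\rho^\e u^\e\to\rho u$ in $L^1$, which is precisely what $\cH(f^\e|\mu)\to 0$ supplies. The cleanest way around this is probably to work on $[0,T)$ inductively in small subintervals, noting that on every compact subinterval the pointwise bound $\|\ufilt^\e-\ufilt\|_\infty\lesssim \|\phi\|_{C^1}\big(\|\rho^\e-\rho\|_{L^1}+\|\rho^\e u^\e - \rho u\|_{L^1}\big)$ (using the uniform lower bound on $\rho^\e_\phi$) degrades controllably with $\cH(f^\e|\mu)^{1/2}$ via \CK, so the resulting inequality still closes via Gr\"onwall.
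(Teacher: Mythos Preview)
Your overall strategy is correct and matches the paper's: compute $\ddt\cH(f^\e|\mu)$, extract the $\frac{1}{\e}$-dissipation, bound the Reynolds stress by $\sqrt{E^\e\cI_\e}$, and close by Gr\"onwall. However, your ``main obstacle'' is a phantom. You already observe at the end that $\|\ufilt^\e-\ufilt\|_\infty\lesssim\cH(f^\e|\mu)^{1/2}$ via \CK\ and the uniform lower bound on $\rho^\e_\phi$; plugging this into $\int\rho^\e(\ufilt^\e-\ufilt)\cdot(u^\e-u)\dx$ and applying Cauchy--Schwarz gives a term of size $C\cH(f^\e|\mu)$, which goes straight into the Gr\"onwall constant. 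There is no circularity and no need for a bootstrap in subintervals---the alignment contribution is not an $o_\e(1)$ remainder at all.

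The paper's execution differs from yours in two technical respects. First, rather than computing $\ddt\cH(f^\e|\mu)$ as one object, it splits $\cH(f^\e|\mu)=\cH_\e+\cG_\e$ into a kinetic part (entropy relative to the global Maxwellian) and a purely macroscopic part. Each piece carries a residual term $\pm(\cE^\e-\cE^\e_\phi)$ that cancels exactly upon addition; this is the structural reason the alignment drift causes no trouble. Second, for the alignment difference the paper proves the weighted $L^2$ bound $\int_{\O^n}\rho^\e|\ufilt^\e-\ufilt|^2\dx\leq C\,\cH(f^\e|\mu)$ directly, by adding and subtracting the Favre filtration of the target $u$ with respect to $\rho^\e$ and using Jensen's inequality with the probability measure $\rho^\e(y)\phi(x-y)\dy/\rho^\e_\phi(x)$. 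This exploits the special structure of $\ufilt$ and is sharper than passing through $L^\infty$, though your $L^\infty$ route also works here because of the smoothing from $\phi$. Finally, your Reynolds-stress bound should be stated as $\int_{\O^n}|\cR_\e|\dx\leq C\sqrt{E^\e\cI_\e}+C\e E^\e$ (the paper includes the $\e$-correction from the modified Fisher information $\cI_\e$), after which Young's inequality absorbs $\sqrt{E^\e\cI_\e}$ into $\frac{1}{2\e}\cI_\e+C\e E^\e$, yielding $\ddt\cH(f^\e|\mu)\leq C_1\cH(f^\e|\mu)+C_2\e$.
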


\begin{remark} The only purpose of the assumption on the communication kernel here is to ensure that the mollified densities all enjoy a common lower bound on the domain in question:
\begin{equation}\label{e:rmin}
\rho^\e_\phi(x,t) > \rmin, \qquad (x,t) \in \O^n \times [0,T).
\end{equation}
Any family of solutions satisfying \eqref{e:rmin} would fit into the framework of the proof of \prop{p:limit} and the convergence result for such a family  would still hold.
\end{remark}

\begin{proof}

We begin by breaking down the relative entropy into kinetic and macroscopic parts:
\begin{equation}\label{}
\cH(f^\e | \mu) = \cH_\e + \cG_\e .
\end{equation}
The kinetic component
\[
\cH_\e = \int_{\domain} \left( f^\e \log f^\e + \frac12 |v|^2 f^\e \right)\dv \dx +  \frac{n M}{2} \log(2\pi)
\]
is exactly the same entropy relative to the basic Maxwellian  \eqref{e:Max0} we considered in the previous section. The macroscopic component is given by
\[
\cG_\e = \int_{\O^n} \left( \frac12 \rho_\e |u|^2 - \rho_\e u_\e \cdot u - \rho_\e \log \rho \right) \dx.
\]
We now state the energy bounds for each component.
\begin{lemma}\label{l:elawepsilon}
We have the following entropy laws:
\begin{align}
\ddt \cH_\e & \leq n M; \label{e:HM} \\ 
\ddt \cH_\e  &= - \frac{1}{\e} \cI_\e + \frac{\e}{4} \int_{\domain} |v -u^\e|^2 f^\e \dx \dv   +\cE_\phi^\e - \cE^\e, \label{e:Hident}
\end{align}
where 
\[
\cI_\e = \int_{\domain}\frac{|\n_v f^\e+\left(1+ \frac{\e}{2}\right)   (v -u^\e) f^\e|^2}{f^\e}  \dx \dv .
\]
\end{lemma}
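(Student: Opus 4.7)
The plan is to compute $\ddt \cH_\e$ directly from equation \eqref{e:FPAe} and organize the resulting terms to match the identity \eqref{e:Hident}; the inequality \eqref{e:HM} will then drop out by a Cauchy--Schwarz / AM-GM argument that absorbs the positive $2nM/\e$ contribution against the Fisher dissipation.

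First I would write $\cH_\e = \int_\domain f^\e(\log f^\e + |v|^2/2)\dv\dx + \text{const}$ and differentiate, testing \eqref{e:FPAe} against $\log f^\e + |v|^2/2 + 1$. The transport term $v\cdot\n_x f^\e$ vanishes after integration by parts in $x$. For the remaining two operators I integrate by parts in $v$; writing $K_\e := \int_\domain |v-u^\e|^2 f^\e \dv\dx$ and using the bookkeeping identities $\int_\domain |v|^2 f^\e \dv\dx - \cE^\e = K_\e$, $\int_\domain (v-u^\e)\cdot\n_v f^\e \dv\dx = -nM$, and $\int_\domain \ufilt^\e\cdot v\, f^\e \dv\dx = \cE_\phi^\e$, I arrive at
\[
\ddt \cH_\e = -\frac{1}{\e}\int_\domain \frac{|\n_v f^\e|^2}{f^\e}\dv\dx + \frac{2nM}{\e} + nM - \frac{1}{\e}K_\e - K_\e + \cE_\phi^\e - \cE^\e.
\]

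To recover \eqref{e:Hident} I expand the square in $\cI_\e$ and again use $\int(v-u^\e)\cdot\n_v f^\e = -nM$: the quadratic factor $(1+\e/2)^2 = 1+\e+\e^2/4$ reshuffles precisely so that $-\frac{1}{\e}\cI_\e + \frac{\e}{4}K_\e$ equals the first four terms of the display above. This coefficient matching is the one place where one has to be careful: the extra $\e/2$ shift inside $\cI_\e$ (beyond the natural completion $\int |\n_v f^\e + (v-u^\e) f^\e|^2/f^\e$) is exactly what accounts for the alignment drift $-(v-\ufilt^\e)f^\e$ which contributes at order $\e^0$ rather than $\e^{-1}$, and the companion correction $\frac{\e}{4}K_\e$ is pinned down by this same reshuffling.

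For the inequality \eqref{e:HM}, the energy hierarchy \eqref{e:enhier} gives $\cE_\phi^\e - \cE^\e \leq 0$ and $-K_\e \leq 0$ is immediate; the remaining task is to dominate the positive $2nM/\e$. This falls directly out of Cauchy--Schwarz
\[
nM = \left|\int_\domain (v-u^\e)\cdot\n_v f^\e \dv\dx\right| \leq \left(\int_\domain \frac{|\n_v f^\e|^2}{f^\e}\dv\dx\right)^{1/2} K_\e^{1/2},
\]
followed by AM-GM, which yields $\int |\n_v f^\e|^2/f^\e + K_\e \geq 2nM$, so the two $-\frac{1}{\e}$ terms together dominate $\frac{2nM}{\e}$ and leave $\ddt\cH_\e \leq nM$. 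The entire argument is formal bookkeeping valid for smooth solutions; rigorous justification for the weak solutions under consideration would follow a standard regularization scheme along the lines of \cite{KMT2013}, which is why the authors treat these identities as part of the definition of the weak solution class rather than as theorems requiring independent proof.
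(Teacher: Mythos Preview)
Your proposal is correct and follows essentially the same computation as the paper. The only cosmetic difference is in the proof of \eqref{e:HM}: the paper keeps the $\frac{1}{\e}$-term in the form of the complete square $\int |\n_v f^\e + (v-u^\e)f^\e|^2/f^\e$ from the outset and simply drops it as nonnegative, whereas you first integrate the cross term by parts to produce the explicit $2nM/\e$ and then re-absorb it via Cauchy--Schwarz/AM--GM; the two are equivalent (your inequality $\int|\n_v f^\e|^2/f^\e + K_\e \geq 2nM$ is exactly the statement that the square is nonnegative), but the paper's route is marginally more direct.
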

\begin{proof}
Differentiating,
\begin{equation}\label{e:He}
\begin{split}
\ddt \cH_\e  = &- \frac{1}{\e} \int_{\domain}\left[ \frac{|\n_v f^\e|^2}{f^\e} + 2 \n_v f^\e \cdot (v -u^\e) + |v -u^\e|^2 f^\e \right]\dx \dv \\
& - \int_{\domain} [\n_v f^\e \cdot (v - \ufilt^\e)  + v \cdot (v - \ufilt^\e) f^\e ] \dx \dv.
\end{split}
\end{equation}
To prove \eqref{e:HM} we simply dismiss the information term, and rewrite the filtered term as follows
\[
- \int_{\domain} [\n_v f^\e \cdot (v - \ufilt^\e)  + v \cdot (v - \ufilt^\e) f^\e ] \dx \dv = nM - E^\e + \cE^\e_\phi \leq nM,
\]
where the latter is due to \eqref{e:enhier}. This proves  \eqref{e:HM}.

To show \eqref{e:Hident} we treat the filtered term somewhat differently:
\begin{equation*}\label{}
\begin{split}
\int_{\domain} & [\n_v f^\e \cdot (v - \ufilt^\e)  + v \cdot (v - \ufilt^\e) f^\e ] \dx \dv  = \int_{\domain} \n_v f^\e \cdot v  \dx \dv  +\int_{\domain}  |v|^2 f^\e  \dx \dv - \cE^\e_\phi  \\
& = \int_{\domain} \n_v f^\e \cdot (v - u^\e)   \dx \dv  +  \int_{\domain}  |v - u^\e |^2 f^\e  \dx \dv  + \cE^\e - \cE^\e_\phi.
\end{split}
\end{equation*}
Coming back to the main equation \eqref{e:He} we obtain
\begin{equation*}\label{}
\begin{split}
\ddt \cH_\e  = &- \frac{1}{\e} \int_{\domain}\frac{1}{f^\e} \left[ |\n_v f^\e|^2 + 2\left(1+ \frac{\e}{2}\right) \n_v f^\e \cdot (v -u^\e) f^\e + \left(1+ \frac{\e}{2}\right)^2 |v -u^\e|^2 (f^\e)^2 \right]\dx \dv \\
& + \frac{\e}{4}  \int_{\domain} |v -u^\e|^2 f^\e \dx \dv   +\cE_\phi^\e - \cE^\e,
\end{split}
\end{equation*}
as desired.
\end{proof}

The main consequence of \eqref{e:HM}  is that the entropy $\cH^\e$ remains bounded on the time interval $[0,T)$ \emph{uniformly} in $\e$. This in turn implies uniform bound on the total energy $E^\e$ by way of the argument presented in \sect{s:ee},
\begin{equation}\label{ }
E^\e \leq  C,
\end{equation}
with $C$ independent of $\e$.

\begin{lemma}\label{}
We have the following inequality
\begin{equation}\label{e:Ge}
\ddt \cG_\e \leq C \cH(f^\e | \mu) + C \sqrt{E^\e  \cI_\e} + C\e E^\e+  \cE^\e - \cE_\phi^\e,
\end{equation}
where $C$ is independent of $\e$.
\end{lemma}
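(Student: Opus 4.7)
The plan is to perform a relative-entropy computation at the macroscopic level. Since $\cG_\e$ involves only the macroscopic fields $(\rho^\e, u^\e)$ and the smooth limiting $(\rho, u)$, I would differentiate it in time using the continuity and momentum equations from \eqref{e:macroe} for the former and the smooth equations from \eqref{e:macro} for the latter, both available because $(\rho, u)$ is smooth and non-vacuous on $[0,T)$. After integrating by parts in $x$, the resulting identity decomposes into three groups: (i) classical relative-entropy flux terms quadratic in $u^\e - u$ and in $(\rho^\e - \rho)/\rho$, multiplied by $C^1$-quantities of the smooth fields; (ii) the stress-tensor contribution $-\int_{\O^n} \cR_\e : \n u \dx$; and (iii) alignment contributions from the forcings $\rho^\e(\ufilt^\e - u^\e)$ and $\rho(\ufilt - u)$.

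For group (i), the smoothness of $(u, \rho)$ and the uniform lower bound on $\rho$ control all such terms by a constant multiple of the macroscopic relative entropy
\[
\cH(\mu^\e | \mu) = \tfrac12 \int_{\O^n} \rho^\e |u^\e - u|^2 \dx + \int_{\O^n} \rho^\e \log(\rho^\e/\rho) \dx \leq \cH(f^\e | \mu),
\]
producing the $C \cH(f^\e|\mu)$ contribution. For group (ii), I would recast the stress tensor using the identity $\int_{\R^n}(v-u^\e)_i \p_{v_j} f^\e \dv = -\d_{ij}\rho^\e$, which cancels the $-\I\rho^\e$ piece of $\cR_\e$ and gives
\[
\cR_{\e,ij} = \int_{\R^n} (v - u^\e)_i \Big[ \p_{v_j} f^\e + (v - u^\e)_j f^\e \Big] \dv.
\]
After adjusting the inner bracket for the $(1+\e/2)$ coefficient in $\cI_\e$, Cauchy-Schwarz yields $\int_{\O^n} |\cR_\e|^2 \dx \leq C E^\e \cI_\e + C\e^2 (E^\e)^2$, and consequently $\left| \int_{\O^n} \cR_\e : \n u \dx \right| \leq C \sqrt{E^\e \cI_\e} + C\e E^\e$, delivering the two remaining bulk terms.

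For group (iii), the sum of the two alignment forcings, after adding and subtracting $u^\e$ and $\ufilt^\e$ as in the bookkeeping used in the proof of \lem{l:elawepsilon}, produces with unit coefficient exactly the residual $\cE^\e - \cE_\phi^\e$ of the target inequality (so that, when later added to \eqref{e:Hident}, the $\cE$-terms cancel), plus cross terms of the form $\int_{\O^n} \rho^\e (u^\e - u)\cdot(\ufilt^\e - \ufilt) \dx$. Under the global-kernel hypothesis the uniform lower bound \eqref{e:rmin} on $\rho_\phi^\e$ holds, so the Favre filtration is Lipschitz as a functional of $(\rho^\e, \rho^\e u^\e)$; the cross terms are then dominated by $\|\rho^\e - \rho\|_{L^1}^2 + \|\rho^\e u^\e - \rho u\|_{L^1}^2$, and hence by $\cH(\mu^\e|\mu) \leq \cH(f^\e|\mu)$ via \CK, absorbing into the $C\cH(f^\e|\mu)$ term.

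The main obstacle I anticipate is the bookkeeping in group (iii): the filtered velocity $\ufilt^\e$ is a nonlocal quadratic functional of $(\rho^\e, u^\e)$ that does not linearize directly as $u^\e - u$, so separating the exact residual $\cE^\e - \cE_\phi^\e$ from the controllable error requires care. The global-kernel assumption is decisive here as it grants the uniform positivity of $\rho_\phi^\e$ and thereby the Lipschitz structure of the Favre operator, without which the difference $\ufilt^\e - \ufilt$ could not be bounded back by the macroscopic relative entropy.
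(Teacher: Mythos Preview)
Your proposal is correct and follows essentially the same route as the paper: differentiate $\cG_\e$ via the macroscopic systems \eqref{e:macroe} and \eqref{e:macro}, isolate the quadratic term $\int\rho^\e(u^\e-u)\cdot\nabla u\cdot(u^\e-u)$, bound the Reynolds stress by the Mellet--Vasseur argument with the $(1+\e/2)$ correction, and rewrite the alignment contribution so that $-\int\rho^\e u^\e\cdot(\ufilt^\e-u^\e)=\cE^\e-\cE_\phi^\e$ emerges cleanly while the residual cross term $\int\rho^\e(u-u^\e)\cdot(\ufilt-\ufilt^\e)$ is controlled by $\cH(f^\e|\mu)$ using the lower bound \eqref{e:rmin}. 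The only cosmetic difference is that the paper estimates $\uFavre-\uFavre^\e$ by inserting the intermediate quantity $(u\rho^\e)_\phi/\rho^\e_\phi$ and applying Jensen, whereas you invoke a direct Lipschitz bound on the Favre map; both yield the same control.
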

\begin{proof} Let us compute the derivative of each component of $\cG_\e$ (we omit the integral signs on the right hand side for short):
\[
\begin{split}
\ddt \frac12 \int_{\O^n} \rho^\e |u|^2 \dx & =  \rho^\e (u^\e - u) \cdot \n u \cdot u - (\rho^\e - \rho) u \cdot \n \log \rho - u \cdot \n \rho + \rho^\e u (\ufilt - u) \\
\ddt \int_{\O^n} \rho^\e u^\e \cdot u  \dx& = \rho^\e (u^\e - u) \cdot \n u \cdot u^\e + \rho^\e \n \cdot u - \rho^\e u^\e \cdot \n \log \rho - \n u: \cR_\e \\
&+  \rho^\e u (\ufilt^\e - u^\e) +  \rho^\e u^\e (\ufilt - u) \\
\ddt \int_{\O^n} \rho^\e \log \rho \dx &= \rho^\e (u^\e - u) \cdot \n \log \rho - \rho^\e \n \cdot u.
\end{split}
\]
Thus,
\[
\ddt \cG_\e = \n u: \cR_\e  +  \rho^\e (u^\e - u) \cdot \n u \cdot (u^\e - u)  + A,
\]
where  $A$ is the alignment component,
\[
A =   \rho^\e u (\ufilt - u) - \rho^\e u (\ufilt^\e - u^\e) - \rho^\e u^\e (\ufilt - u).
\]
Given that $u$ is smooth we have
\begin{equation}\label{e:Ge1}
\ddt \cG_\e \leq C \int_{\O^n}|\cR_\e | \dx+ C \int_{\O^n}  \rho^\e |u^\e - u|^2  \dx + A.
\end{equation}

Let us proceed with the  alignment term by rewriting it as follows
\[
A = \rho^\e (u - u^\e) (\ufilt - u) - \rho^\e (u - u^\e) (\ufilt^\e - u^\e) - \rho^\e u^\e (\ufilt^\e - u^\e).
\]
The last term here is given by
\[
- \rho^\e u^\e (\ufilt^\e - u^\e) = \cE^\e - \cE_\phi^\e.
\]
The remaining first two terms combined give
\[
\rho^\e (u - u^\e) (\ufilt - \ufilt^\e )  - \rho^\e |u - u^\e|^2 \leq \frac12 \rho^\e |\ufilt - \ufilt^\e|^2 - \frac12 \rho^\e |u - u^\e|^2.
\]
It remains to estimate the first term:
\[
\rho^\e |\ufilt - \ufilt^\e|^2 \leq \rho^\e_\phi |\uFavre - \uFavre^\e|^2.
\]
 Let us recall that the filtrations here  are performed with respect to their corresponding densities. To reconcile this descrepency we add and subtract the Favre filtration of  $u$ with respect to $\rho^\e$:
\[
\uFavre = \frac{(u\rho)_\phi}{\rho_\phi} - \frac{(u\rho^\e)_\phi}{\rho^\e_\phi} + \frac{(u\rho^\e)_\phi}{\rho^\e_\phi}.
\]
Thus,
\[
\rho^\e |\ufilt - \ufilt^\e|^2 \leq 2 \rho^\e_\phi \left|\frac{(u\rho^\e)_\phi}{\rho^\e_\phi}-\frac{(u^\e\rho^\e)_\phi}{\rho^\e_\phi} \right|^2 + 2 \rho^\e_\phi \left| \frac{(u\rho)_\phi}{\rho_\phi} - \frac{(u\rho^\e)_\phi}{\rho^\e_\phi} \right|^2.
\]
The first term is estimated by the \HI\ treating $\rho^\e(y) \phi(x-y) \dy/ \rho^\e_\phi(x)$ as a probability measure,
\[
\begin{split}
\int_{\O^n} \rho^\e_\phi \left|\frac{(u\rho^\e)_\phi}{\rho^\e_\phi}-\frac{(u^\e\rho^\e)_\phi}{\rho^\e_\phi} \right|^2 \dx & \leq \int_{\O^n} \rho^\e_\phi(x) \frac{ \int_{\O^n} |u(y) - u^\e(y)|^2 \rho^\e(y) \phi(x-y) \dy}{\rho^\e_\phi(x)} \dx = \int_{\O^n} \rho^\e |u - u^\e|^2 \dy\\
& \leq \cH(f^\e|\mu).
\end{split}
\]
The second term can be estimated by 
\[
\int_{\O^n} \rho^\e_\phi \left| \frac{(u\rho)_\phi}{\rho_\phi} - \frac{(u\rho^\e)_\phi}{\rho^\e_\phi} \right|^2 \dx \lesssim \int_{\O^n} \frac{| (u\rho)_\phi (\rho^\e - \rho)_\phi |^2}{\rho_\phi^2 \rho_\phi^\e} \dx + \int_{\O^n} \frac{| (u  (\rho^\e - \rho))_\phi |^2}{\rho_\phi^\e} \dx.
\]
Using the simple pointwise estimates
\[
| (u(\rho^\e - \rho))_\phi(x)|,  | (\rho^\e - \rho)_\phi(x)| \lesssim \|\rho^\e - \rho\|_1,
\]
and the fact that the densities are bounded away from zero on the interval $[0,T)$, \eqref{e:rmin}, we obtain
\[
\int_{\O^n} \rho^\e_\phi \left| \frac{(u\rho)_\phi}{\rho_\phi} - \frac{(u\rho^\e)_\phi}{\rho^\e_\phi} \right|^2 \dx \leq C  \|\rho^\e - \rho\|_1^2 \leq C \cH(\rho^\e | \rho) \leq C\cH(f^\e | \mu).
\]
Combining the above we obtain
\[
A \leq C \cH(f^\e | \mu) +  \cE^\e - \cE_\phi^\e.
\]
Thus, 
\begin{equation}\label{e:Ge2}
\ddt \cG_\e \leq C \int_{\O^n}|\cR_\e | \dx+ C \cH(f^\e | \mu) +  \cE^\e - \cE_\phi^\e.
\end{equation}
It remains to estimate the Reynolds stress. A well-known inequality of \cite{MV2008} establishes such a bound in terms of information and energy. Let us rerun this argument to account for the $\e$-correction. Using that 
\[
\int_{\R^n} u^\e \otimes \n_v f^\e \dv = 0, \qquad \int_{\R^n}  \n_v f^\e \otimes \I \dv= -  \int_{\R^n} f^\e \I \dv,
\]
we write
\[
\begin{split}
\cR_\e & = \int_{\R^n} u^\e \sqrt{f^\e} \otimes ( 2 \n_v \sqrt{f^\e} + (v - u^\e) \sqrt{f^\e} ) + ( 2 \n_v \sqrt{f^\e} + (v - u^\e) \sqrt{f^\e} ) \otimes v \sqrt{f^\e}  \ \dv \\
& =  \int_{\R^n} u^\e \sqrt{f^\e} \otimes ( 2 \n_v \sqrt{f^\e} + (1+\e/2) (v - u^\e) \sqrt{f^\e} ) + ( 2 \n_v \sqrt{f^\e} + (1+\e/2)(v - u^\e) \sqrt{f^\e} ) \otimes v \sqrt{f^\e}  \ \dv \\
& - \frac{\e}{2}  \int_{\R^n} [u^\e \otimes (v - u^\e) + (v - u^\e) \otimes u^\e] f^\e  \dv.\end{split}
\]
Thus,
\[
 \int_{\O^n}|\cR_\e | \dx \leq C \sqrt{E^\e  \cI_\e} + C\e E^\e
\]
and the lemma is proved.
\end{proof}

Combining the kinetic and macroscopic laws \eqref{e:Hident}, \eqref{e:Ge}  
we can see that the residual energy $\cE^\e - \cE_\phi^\e$ cancels out and we obtain
\[
\ddt \cH(f^\e | \mu)  \leq C\cH(f^\e | \mu)   - \frac{1}{\e} \cI_\e  + C \e E^\e + C \sqrt{E^\e  \cI_\e} \leq C\cH(f^\e | \mu)  - \frac{1}{2\e} \cI_\e + C \e E^\e \leq C_1 \cH(f^\e | \mu) +C_2,
\]
where $C_i$'s are independent of $\e$. Since initial entropy $\cH(f^\e_0 | \mu) $ vanishes as $\e \to 0$, the \GL\ finishes the proof.

\end{proof}

\section{Appendix: Well-posedness and continuation}\label{s:wp}
In this section we collect and address all the basic issues of well-posedness and  continuation of classical solutions of \eqref{e:FPA}.

To set the stage let us fix value $\s = 1$ as it plays no role in the analysis. Let us consider first the linear FPA model, 
\begin{equation}\label{e:FPAL}
\p_t f + v\cdot \n_x f =  \D_v f + \n_v((v - u)f ), 
\end{equation}
where $u \in L^\infty_\loc([0,T); C^k(\O^n))$ is a given macroscopic field. The well-posedness of solutions on $[0,T)$ in any class $H^k_s$ for this equation follows by the standard linear theory, see for example \cite{Villani} and references therein. Our main existence result holds in $H^k_s(\domain)$, for $k,s> N$, where $N$ is large and dependent only on $n$.
\begin{theorem}\label{t:lwp}
Suppose $f_0\in H^k_s(\domain)$,  is such that $\rmin = \inf (\rho_0)_\phi >0$. Then there exists a unique local solution to \eqref{e:FPA} on a time interval $[0,T)$, where $T >0$ depends only on $E_0$ and $\rmin$,  in the same class
\begin{equation}\label{e:solclass}
f\in L^\infty ([0,T); H^k_s ), \quad \inf_{[0,T)\times \O^n} \rho_\phi >0.
\end{equation}
Moreover, if $f\in L^\infty_\loc ([0,T); H^k_s )$ is a given solution such that 
\begin{equation}\label{e:rholow}
\inf_{[0,T)\times \O^n} \rho_\phi >0,
\end{equation}
then $f$ can be extended beyond $T$ in the same class.
\end{theorem}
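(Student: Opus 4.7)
The plan is to prove both statements through a Picard iteration based on the linear theory for \eqref{e:FPAL}. The key enabler is that the double mollification defining $\ufilt = (\uF)_\phi$ converts moment bounds on $f$ into smoothness of the drift: whenever $\rho_\phi \geq c > 0$, differentiating through the convolutions together with $\|(u\rho)_\phi\|_{C^\ell} \leq \|\phi\|_{C^\ell} \|u\rho\|_{L^1} \lesssim \|\phi\|_{C^\ell}\sqrt{ME}$ and $\|\rho_\phi\|_{C^\ell} \leq \|\phi\|_{C^\ell} M$ produces
\[
\|\ufilt\|_{C^\ell(\O^n)} \leq P_\ell\bigl(1/c,\, \|\phi\|_{C^\ell},\, E^{1/2}\bigr)
\]
for every $\ell\in\N$, where the quotient rule applied to $\uF = (u\rho)_\phi/\rho_\phi$ converts powers of $1/\rho_\phi$ into powers of $1/c$. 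The energy $E$ is controlled uniformly via the argument underlying \eqref{e:Eunif}.

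I would then set $f^{(0)} \equiv f_0$ and define $f^{(n+1)}$ inductively as the solution of \eqref{e:FPAL} with drift $u^{(n)} := \ufilt[f^{(n)}]$, which exists globally in $H^k_s$ by standard linear kinetic Fokker--Planck theory. Two estimates must close uniformly in $n$. \textbf{(A)} An $H^k_s$ energy estimate obtained by pairing the equation against $(1+|v|^2)^s\p^k_{x,v} f$, absorbing top-order drift commutators into the parabolic dissipation in $v$ and using that $\n_v\cdot(vf) = nf + v\cdot\n_v f$ is benign against the polynomial weight; this yields
\[
\ddt \|f^{(n+1)}\|^2_{H^k_s} \leq \Lambda\bigl(\|u^{(n)}\|_{C^{k+1}}\bigr)\bigl(1+\|f^{(n+1)}\|^2_{H^k_s}\bigr).
\]
\textbf{(B)} A lower bound on $\rho^{(n+1)}_\phi$: integrating the equation for $f^{(n+1)}$ in $v$ gives a continuity equation for $\rho^{(n+1)}$, and convolving with $\phi$ together with the Favre identity $(u\rho)_\phi = \uF\rho_\phi$ yields
\[
\p_t \rho^{(n+1)}_\phi + \n\cdot\bigl(\uF[f^{(n+1)}]\,\rho^{(n+1)}_\phi\bigr) = 0,
\]
so along the smooth characteristic flow of $\uF[f^{(n+1)}]$ one has $\rho^{(n+1)}_\phi(t) \geq \rmin \exp(-t\,\|\n\cdot\uF[f^{(n+1)}]\|_\infty)$. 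Choosing $T$ depending only on $E_0$ and $\rmin$ keeps both (A) and (B) uniform in $n$. Convergence of $\{f^{(n)}\}$ in a lower-order norm and uniqueness follow by running the same type of estimate on $f^{(n+1)} - f^{(n)}$, using that the map $f \mapsto \ufilt[f]$ is Lipschitz in $L^1$ as long as $\rho_\phi$ stays bounded below.

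The continuation statement is then immediate. If $\inf_{[0,T)\times\O^n}\rho_\phi \geq c > 0$ on a given solution, then the $C^\ell$-bound on $\ufilt$ is uniform on $[0,T)$, and (A) furnishes a finite $H^k_s$-bound at $t = T$; the lower bound on $\rho_\phi$ also persists continuously for a short further time by (B) applied to the solution itself, so the local existence result applied at $T-\e$ produces an extension strictly past $T$ in the class \eqref{e:solclass}. The main bookkeeping obstacle lies in closing (A): one must track how the weight $(1+|v|^2)^{s/2}$ interacts with the transport operator $v\cdot\n_x$ and the nonlinear drift $\n_v((v-\ufilt)f)$, the latter generating a confining contribution that is absorbed into the dissipation and commutators with the weight handled via $|\n_v(1+|v|^2)^{s/2}| \lesssim (1+|v|^2)^{(s-1)/2}$. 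Everything beyond this is standard for linear kinetic Fokker--Planck with smooth drift.
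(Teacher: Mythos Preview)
Your overall strategy---Picard iteration on the linear problem \eqref{e:FPAL}, uniform $H^k_s$ estimates, convergence in a lower norm, and continuation by restarting---matches the paper's. The substantive gap lies in closing the iteration on a time interval determined \emph{only} by $E_0$ and $\rmin$, as the theorem asserts.

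In step (B) you bound $\rho^{(n+1)}_\phi$ from below along characteristics of $\uF[f^{(n+1)}]$, but $\|\n\cdot\uF[f^{(n+1)}]\|_\infty$ itself carries factors of $1/\rho^{(n+1)}_\phi$, so the estimate is circular and would need a bootstrap. More importantly, that bootstrap (and the claim that $T=T(E_0,\rmin)$) requires uniform control of $E^{(n+1)}$ independent of the $H^k_s$ norm, and your only reference to energy control is \eqref{e:Eunif}. That bound is derived from the entropy law \eqref{e:elaw} for the \emph{nonlinear} equation and does not transfer to the iterates: with the drift $u^{(n)}$ borrowed from the previous step, the cancellation that produces the nonnegative term $\cA$ in \eqref{e:elaw} is lost, and the entropy of $f^{(n+1)}$ need not decay. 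The paper circumvents this by computing the second moment of the iterates directly,
\[
\tfrac12\ddt E^{m+1}=nM-E^{m+1}+\int_{\O^n}(u^{m+1}\rho^{m+1})_\phi\,\uF^{m}\dx,
\]
and estimating the last term via the pointwise bound $|\uF^m|\lesssim \rmin^{-1}\sqrt{ME^m}$ to obtain $E^{m+1}(t)\leq 2E_0$ on an interval depending only on $E_0,\rmin,M$. With that in hand the density bound follows from the one-line estimate $\p_t\rho^{m+1}_\phi=(u^{m+1}\rho^{m+1})_{\n\phi}\geq -C\sqrt{ME^{m+1}}$, which avoids your circularity altogether. Without this ingredient your argument yields only $T=T(\|f_0\|_{H^k_s},\rmin)$, weaker than stated---though still adequate for the continuation claim, since under \eqref{e:rholow} your estimate (A) does bound $\|f(t)\|_{H^k_s}$ uniformly as $t\to T^-$.
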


\begin{proof} The solution will be constructed by an iteration given by $f^0 \equiv f_0$, and 
\begin{equation}\label{e:FPALm}
\begin{split}
\p_t f^{m+1} + v\cdot \n_x f^{m+1} & =  \D_v f^{m+1} + \n_v((v - u^{m})f^{m+1} ), \\
f_0^{m+1} & = f_0.
\end{split}
\end{equation}
where $u^m = u^{m}_{\phi,\rho^m}$.

Let us show that solutions to the above system exist on a common time interval $[0,T)$, where $T$ depends on $E_0$, $\rmin$, and $M$. In fact it suffices to show that on a common time interval the solutions will have a common bound on the energy and density
\begin{equation}\label{e:Er}
E^m(t) \leq 2E_0, \qquad \rho^m_\phi(t) \geq \frac12 \rmin.
\end{equation}
For $m=0$ this is obviously true with $T_0 = \infty$. Suppose \eqref{e:Er} holds for $t<T_m$. Then, since  $u^m \in L^\infty_\loc([0,T_m); C^k(\O^n))$ the solution $f^{m+1}$ exists as least on the same time interval. Estimating pointwise,
\[
| \uFavre^m | \leq 2\rmin^{-1} \|\phi\|_\infty \int_{\O^n} |u^m \rho^m| \dx \leq 2\rmin^{-1} \|\phi\|_\infty M^{1/2} \sqrt{E^m} \leq 2^{3/2} \rmin^{-1} \|\phi\|_\infty M^{1/2} E_0^{1/2} = C \rmin^{-1} M^{1/2} E_0^{1/2},
\]
where $C$ captures all the dependence on the parameters of the system only. Calculating the energy for $f^{m+1}$ from \eqref{e:FPALm} we obtain
\begin{equation}\label{e:Emaux}
\begin{split}
\frac12 \ddt E^{m+1}  & = nM - E^{m+1} + \int_{\O^n} (u^{m+1} \rho^{m+1})_\phi \uFavre^m \dx\\
& \leq nM - E^{m+1} + C \rmin^{-1} M^{1/2} E_0^{1/2}\int_{\O^n} |u^{m+1} \rho^{m+1}| \dx  \\
&\leq nM - E^{m+1} + C \rmin^{-1} M E_0^{1/2}  \sqrt{E^{m+1}} \leq nM - \frac12 E^{m+1} + C \rmin^{-2} M^2 E_0.  
\end{split}
\end{equation}
Consequently,
\[
E^{m+1}(t) \leq E_0 e^{-t} + (2nM + C \rmin^{-2} M^2 E_0) (1 - e^{-t}) \leq 2 E_0,
\]
provided $t <  \frac{c E_0}{2nM + C \rmin^{-2} M^2 E_0} := T_{m+1}$. Hence for $t < T_{m} \wedge T_{m+1}$ we have
\begin{equation*}\label{}
\begin{split}
\p_t \rho^{m+1}_\phi & = \n_x \cdot (u^{m+1} \rho^{m+1})_\phi = (u^{m+1} \rho^{m+1})_{\n \phi}  \geq - \| \n\phi\|_\infty \int_{\O^n} |u^{m+1} \rho^{m+1}| \dx \\
&  \geq  - C M^{1/2} \sqrt{E^{m+1}} \geq - C \sqrt{M E_0}.
\end{split}
\end{equation*}
So, pointwise,
\begin{equation}\label{e:rlowm}
\rho^{m+1}_\phi (t) \geq \rmin - t C \sqrt{M E_0} \geq \frac12 \rmin,
\end{equation}
provided $t< C\rmin (M E_0)^{-1/2}$. Resetting 
\[
T_{m+1} = \min\left\{ T_m, C\rmin (M E_0)^{-1/2},  \frac{c E_0}{2nM + C \rmin^{-2} M^2 E_0} \right\},
\]
we can see that the new restriction on time is independent of $m$. Since initially $T_0 = \infty$ the induction proves that the solutions will exist on the common time interval $[0,T)$ with
\[
T = \min\left\{C\rmin (M E_0)^{-1/2},  \frac{c E_0}{2nM + C \rmin^{-2} M^2 E_0} \right\}.
\]

We have constructed a sequence of solutions $f^m$ satisfying \eqref{e:Er} on a common interval $[0,T)$. This implies uniform bounds on the family in $H^k_s$. Indeed, all the norms $\| u^m(t) \|_{C^k}$ for $t<T$ depend only on the bounds \eqref{e:Er}, while the standard energy estimates provide an exponential bound on $\|f^m\|_{H^k_s}$ only in terms of  $\| u^m \|_{L^\infty C^k}= C(E_0,\rmin)$,
\begin{equation}\label{e:fmH}
\|f^m(t) \|_{H^k_s} \leq \|f^m_0\|_{H^k_s} e^{C(E_0,\rmin) t}.
\end{equation}
Next, let us estimate the time derivative in $L^2$. We have
\[
\| \p_t f^m \|_2 \leq \| v \cdot \n_x f^m \|_2 + \| \D_v f^m \|_2 + n \|f^m\|_2 + \| (v - u^{m-1}) \cdot \n_v f^m\|_2 \leq  C(E_0,\rmin) \|f^m\|_{H^k_s},  
\]
which according to \eqref{e:fmH} is uniformly bounded on $[0,T)$. Thus, 
\[
f^m \in L^\infty([0,T); H^k_s) \cap \Lip([0,T); L^2),
\]
uniformly. In view of the fact that $H^k_s \ss H^{k'}_{s'}$, $k'<k$, $s'<s$, compactly, and of course $H^{k'}_{s'} \ss L^2$,  the Aubin-Lions Lemma implies compactness of the family in any $C([0,T);  H^{k'}_{s'})$. Passing to a subsequence we obtain a solution $f^m \to f$. It is easy to show that in this case $u^m \to \ufilt$ in any $C^l$, $l\in \N$, which is more than necessary to conclude that $f$ solves \eqref{e:FPA}. By weak compactness we also obtain membership in the top space $f\in L^\infty([0,T); H^k_s)$.

\def \tf {\tilde{f}}
\def \tu {\tilde{u}}
\def \tuF {\tilde{u}_{\mathrm{F}}}
\def \tr {\tilde{\rho}}

Let us have two solutions $f$ and $\tf$ in class \eqref{e:solclass} starting from the same initial condition $f_0$. Denote $g = f - \tf$. We will estimate evolution of this difference in the weighted class $L^2_s = H^0_s$. Let us take the difference 
\[
\p_t g + v \cdot \n_x g = \D_v g + \n_v ( v g - \uf g + (\tuf - \uf) \tf).
\]
Testing with $\ave{v}^s g$ we obtain
\begin{equation*}\label{}
\begin{split}
\ddt \|g\|_{L^2_s}^2 & \leq  - \|\n_v g\|_{L^2_s}^2 + \int_\domain \ave{v}^{s-1} |\n_v g| |g| \dv\dx  \\
&- \int_\domain  \n_v  g ( v g - \uf g + (\tuf - \uf) \tf) \ave{v}^s \dv\dx\\
& + \int_\domain  | g| | v g - \uf g + (\tuf - \uf) \tf| \ave{v}^{s-1} \dv\dx
\end{split}
\end{equation*}
We have
\[
 \int_\domain \ave{v}^{s-1} |\n_v g| |g| \dv\dx \leq  \|\n_v g\|_{L^2_s}  \|g\|_{L^2_{s-2}} \leq \frac14 \|\n_v g\|_{L^2_s}^2 +  c \|g\|_{L^2_{s}}^2.
 \]
 By a similar estimate and using that $\uf$ is bounded, we obtain
 \begin{equation*}\label{}
\begin{split}
 \int_\domain  \n_v  g ( v g - \uf g) \dv \dx & \leq  \frac14 \|\n_v g\|_{L^2_s}^2 +  c \|g\|_{L^2_{s}}^2, \\
  \int_\domain  | g| | v g - \uf g | \ave{v}^{s-1} \dv\dx & \leq c \|g\|_{L^2_{s}}^2.
\end{split}
\end{equation*}
Next,
\begin{equation*}\label{}
\begin{split}
\int_\domain | \n_v  g| | (\tuf - \uf) \tf| \ave{v}^s \dv\dx & \leq \frac14 \|\n_v g\|_{L^2_s}^2 + \| \tuf - \uf \|_\infty^2 \| \tf\|^2_{L^2_s} \\
 \int_\domain  | g| |(\tuf - \uf) \tf| \ave{v}^{s-1} \dv\dx & \leq \| g\|_{L^2_s}^2 +  \| \tuf - \uf \|_\infty^2 \| \tf\|^2_{L^2_s}.
\end{split}
\end{equation*}
Recall that $\| \tf\|^2_{L^2_s} \leq C$ on the interval of existence.  So, adding the above inequalities we obtain
\[
\ddt \|g\|_{L^2_s}^2 \lesssim C  \| g\|_{L^2_s}^2  +  \| \tuf - \uf \|_\infty^2.
\]
Finally,
\[
 \| \tuf - \uf \|_\infty^2 \leq \|\phi\|_\infty  \int_{\O^n} | \uF (y) - \tuF(y) |^2 \dy,
\]
and using the lower bound on the density and the fact that $(u\rho)_\phi, (\tu \tilde{\rho})_\phi$ remain uniformly bounded, we obtain
\begin{equation*}\label{}
\begin{split}
 \| \tuf - \uf \|_\infty^2 & \lesssim \int_{\O^n} | (u\rho -\tu \tilde{\rho})_\phi |^2 \dy +  \int_{\O^n} | (\rho -\tilde{\rho})_\phi |^2 \dy \\
  & \lesssim \int_{\O^n} |u\rho -\tu \tilde{\rho}|^2 \dy +  \int_{\O^n} | \rho -\tilde{\rho} |^2 \dy.
\end{split}
\end{equation*}
The differences can be estimated as follows (keeping in mind that $s>n+2$),
\begin{equation*}\label{}
\begin{split}
\int_{\O^n} |u\rho -\tu \tilde{\rho}|^2 \dy & = \int_{\O^n} \left( \int_{\R^n} |v| |g| \dv \right)^2 \dy =  \int_{\O^n} \left( \int_{\R^n} \ave{v}^{s/2} |g| \frac{\dv}{\ave{v}^{s/2 - 1 }}  \right)^2 \dy \\
& \leq C \int_{\O^n} \int_{\R^n} \ave{v}^{s} |g|^2 \dv  \dy = C \| g\|_{L^2_s}^2.
\end{split}
\end{equation*}
Similarly,
\[
  \int_{\O^n} | \rho -\tilde{\rho} |^2 \dy  \leq C \| g\|_{L^2_s}^2.
\]
We thus obtain
\[
\ddt \|g\|_{L^2_s}^2 \leq C  \| g\|_{L^2_s}^2 .
\]
Since initially $g = 0$, the result follows.

The continuation criterion follows readily from the above. Notice that according to \eqref{e:Eunif}  the energy will remain uniformly bounded by on the interval of existence: $E(t) \leq C_1 \cH_0 +C_2$. Together with the assumption \eqref{e:rholow} it implies that the solution will exist on a finite time-span $T_0$ which depends only on $E_0,M,T$, starting from any time $<T$. By uniqueness the extended solution will coincide with the original one on the overlap.
\end{proof}


\end{document}